\newtheorem{theo}{Theorem}[section]
\newtheorem{lem}{Lemma}[section]
\newtheorem{prop}{Proposition}[section]
\newtheorem{corollary}{Corollary}[section]
\newtheorem{defi}{Definition}[section]
\newtheorem{assum}{Assumption}[section]
\newtheorem{remark}{Remark}[section]
\newcommand\eps{\varepsilon}
\newcommand\ka{\kappa}
\newcommand{\sgn}{\text{sgn}}
\def \p {{\partial}}
\def \induu {\mathbf{1}_{ \{u_{1}<0 \} }}
\def \indud {\mathbf{1}_{\{ u_{2}<0\}}}
\def \induz {\mathbf{1}_{\{u_{0}<0\} }}
\def \indvu {\mathbf{1}_{\{q_{1}<0\} }}
\def \indvd {\mathbf{1}_{\{q_{2}<0\}}}
\def \indw {\mathbf{1}_{\{w_{1}\geq 0 \}}}
\def \indww {\mathbf{1}_{\{w_{2}\geq 0 \}}}
\def \indwz {\mathbf{1}_{\{w_{0}\geq 0 \}}}
\def \indz {\mathbf{1}_{\{z_{1}\geq 0 \} }}
\def \indzz {\mathbf{1}_{\{z_{2}\geq 0 \}}}
\def \tu {\tilde{u}}
\def \tq {\tilde{q}}
\def \mT {\mathcal{T}}
\def \tU {\widetilde{U}}
\def \tW {\widetilde{W}}
\begin{document}

\newcommand{\ue}{\frac{1}{\eps}}
\newcommand{\bphi}{{\boldsymbol{\phi} }}
\newcommand{\cS}{{\mathcal S}}
\newcommand{\BV}{{\rm BV}}
\newcommand{\bBV}{{\rm  {\mathbf B}{\mathbf V}}}

\newcommand{\e}{\eps}
\newcommand{\uzdu}{u^{0,\delta}_1}
\newcommand{\uzdd}{u^{0,\delta}_2}
\newcommand{\qzdu}{q^{0,\delta}_1}
\newcommand{\qzdd}{q^{0,\delta}_2}
\newcommand{\uzdz}{u^{0,\delta}_0}
\newcommand{\ubd}{u^\delta_b}
\newcommand{\chiu}[1]{\chi_{#1}}
\newcommand{\ti}[1]{\tilde{#1}}
\newcommand{\RR}{{\mathbb R}}
\newcommand{\nrm}[2]{\left\lVert#1\right\rVert_{#2}}
\newcommand{\dx}{{\partial_x}}
\newcommand{\dt}{{\partial_t}}

\title{Reduction of a model for sodium exchanges in kidney nephron}
\author[1,2,3]{Marta Marulli}
\author[2]{Vuk Milišić}
\author[2]{Nicolas Vauchelet}
\affil[1]{University of Bologna, Department of Mathematics, Piazza di Porta S. Donato 5, 40126 Bologna, Italy}
\affil[2]{Université Sorbonne Paris Nord, Laboratoire Analyse, Géométrie et Applications, LAGA, CNRS UMR 7539, F-93430, Villetaneuse, France}
\affil[3]{{Math\'ematiques Appliqu\'ees à Paris 5}, MAP5 CNRS UMR 8145,  Université Paris Descartes}

\date{}                     
\setcounter{Maxaffil}{0}
\renewcommand\Affilfont{\itshape\small}
\maketitle

\begin{abstract}

  This work deals with a mathematical analysis of sodium's transport in a  tubular architecture of a kidney nephron.
 The nephron is modelled by two counter-current tubules. Ionic exchange occurs at the interface between the tubules and the epithelium and between the epithelium and the surrounding environment (interstitium).
  From a mathematical point of view, this model consists of a 5$\times$5 semi-linear hyperbolic system.
  In the literature similar models neglect the epithelial layers.
  In this paper, we show rigorously that such models may be obtained by assuming that the permeabilities between  lumen and  epithelium are large. Indeed we show that when these grow,  solutions of the 5$\times$5
  system converge in a certain way to  solutions of a reduced 3$\times$3 system where no epithelial layer
  is present. The problem is defined on a bounded spacial domain with initial and boundary data. 
  Establishing $\BV$ compactness forces to introduce initial layers and to handle carefully 
 	the presence of lateral boundaries.
\end{abstract}

\textbf{Key words:} Hyperbolic systems, relaxation limit, characteristics method, boundary layers, ionic exchanges.

\medskip

\textbf{AMS Subject classification:}  22E46, 53C35, 57S20


\section{Introduction}

\section{Introduction}	
In this study, we consider a mathematical model for a particular component of the nephron, the functional unit of kidney. It describes the ionic exchanges through the nephron tubules in the Henle's loop.
The main function of the kidneys is the filtration of blood.
Through filtration, secretion and excretion of filtered metabolic wastes and toxins, the kidneys are able to maintain a certain homeostatic balance within cells. Despite the development of sophisticated models about water and electrolyte transport in the kidney, some aspects of the fundamental functions of this organ remain yet to be fully explained,\cite{anitalayton}.
For example, how a concentrated urine can be produced by the mammalian kidney when the animal is deprived of water remains not entirely clear. \\
The loop of Henle and its architecture play an important role in the concentrated or diluted urine formation.
In order to explain how an animal or a human being can produce a concentrated urine and what this mechanism depends on, we need to analyse the counter-current transport in the 'ascending' and 'descending' tubules. There the ionic exchanges between the cell membrane
and the environment where tubules are immersed, take place. \\
We consider a simplified model for sodium exchange in the kidney nephron. In this simplified version, the nephron is modelled by two tubules, one ascending and one descending, of length denoted $L$. Ionic exchanges and transport occur at the interface between the lumen and the epithelial layer (cell membrane) and at the interface between the cells and the interstitium (this term indicates all the space/environment that surrounds the tubules and blood vessels). A schematic representation for the model is given in Figure \ref{fig:1}.
If we denote $t\geq 0$ and $x\in (0,L)$ the time and space variables, respectively, the dynamics of ionic concentrations is modelled by the following semi-linear hyperbolic system (see e.g.\cite{MAMV,magali,tesp,magali2})

\begin{equation}\label{problem}
\begin{cases}
\p_{t}u_1 + \alpha\p_{x} u_1=J_1=2\pi r_1 P_1(q_{1}-u_1) \\
\p_{t}u_2 - \alpha\p_{x} u_2=J_2=2\pi r_2 P_2(q_{2}-u_2) \\
\p_{t}q_{1}=J_{1,e}=2\pi r_1 P_1(u_{1}-q_{1}) + 2\pi r_{1,e} P_{1,e}(u_{0}-q_{1}) \\
\p_{t}q_{2}=J_{2,e}=2\pi r_2 P_2(u_{2}-q_{2}) + 2\pi r_{2,e} P_{2,e}(u_{0}-q_{2})- G(q_{2})\\
\vspace{2mm} \p_{t}u_0=J_0=2\pi r_{1,e} P_{1,e}(q_1-u_0) + 2\pi r_{2,e} P_{2,e}(q_2-u_0) + G(q_2),
\end{cases}
\end{equation}
complemented with the boundary and initial conditions
\begin{equation}\label{problembord}
\begin{cases}
u_1(t,0)=u_b(t); \quad u_1(t,L)=u_2(t,L) \quad t > 0\\
u_1(0,x)=u_1^{0}(x); \quad u_2(0,x)=u_2^{0}(x); \quad u_{0}(0,x)=u_{0}^{0}(x); \\
q_1(0,x)=q_1^{0}(x); \quad q_2(0,x)=q_2^{0}(x).
\end{cases}
\end{equation}
In this model, we have used the following notations~:
\begin{itemize}
	\item $r_{i}:$ denote the radius for the lumen $i$ $([m])$.
	\item $r_{i,e}:$ denote the radius for the tubule $i$ with epithelium layer.
	\item Sodium's concentrations ($[mol/m^{3}$]) : \\
	$u_{i}(t,x):$  in the lumen $i$, \\
	$q_{i}(t,x):$  in the epithelium 'near' lumen, $i$ \\
	$u_{0}(t,x):$  in the interstitium.
	\item Permeabilities $([m/s])$: \\
	$P_{i}:$ between the lumen and the epithelium,  \\
	$P_{i,e}:$ between the epithelium and the interstitium.
\end{itemize}
In this work we will indicate as lumen the considered limb and as tubule the segment with its epithelial layer. In physiological common language, the term 'tubule' refers to the cavity of lumen together with its related epithelial layer (membrane) as part of it, \cite{laytonedwards}.\\
In the ascending tubule, the transport of solute both by  passive diffusion and  active re-absorption uses $Na^{+}/K^{+}$-ATPases pumps, which
exchange 3 $Na^{+}$ ions for 2 $K^{+}$ ions.
This active transport is modelled by a non-linear term given by the Michaelis-Menten kinetics~:
\begin{equation}\label{(G)}
G(q_2)=V_{m,2} \left (  \frac{q_2}{k_{M,2}+q_{2}} \right )^{3}.
\end{equation}
where  $k_{M,2}$ and $V_{m,2}$ are real positive constants. 
In each tubule, the fluid (mostly water) is assumed to flow at constant rate $\alpha$ and 
we only consider one generic uncharged solute in two tubules as depicted in Figure \ref{fig:1}.
\begin{figure}[ht!]	\centering
	\includegraphics[scale=0.5]{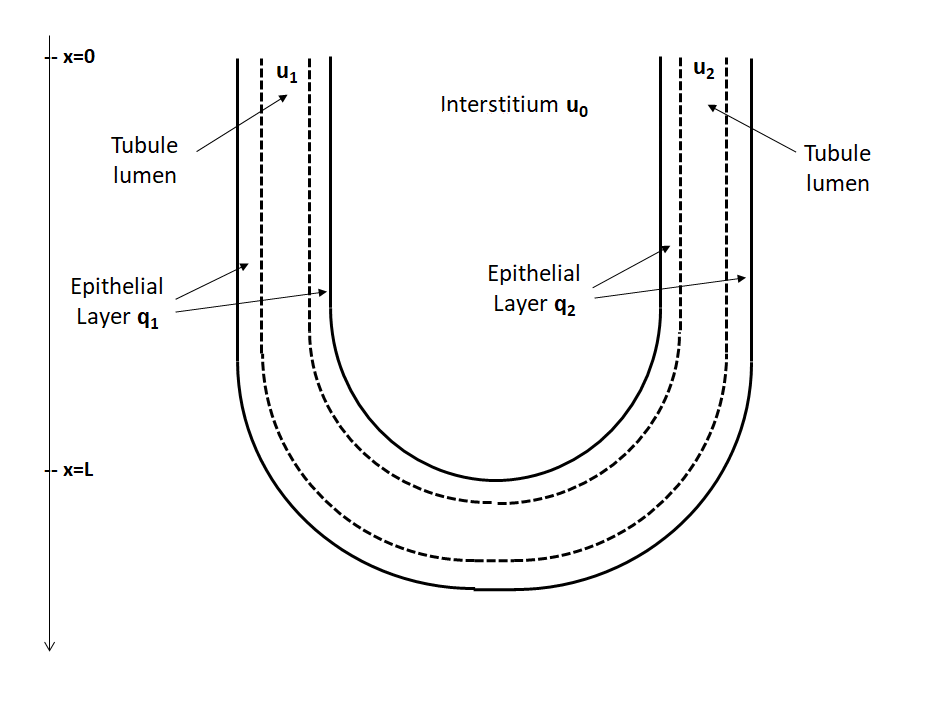}
	\caption{Simplified model of the loop of Henle. $q_1$, $q_2$,  $u_1$ and $u_2$ denote solute	concentration in the epithelial layer and lumen of the descending/ascending limb, respectively.}
	\label{fig:1}
\end{figure}

In a recent paper \cite{MAMV}, the authors have studied the role of the epithelial layer in the ionic transport. 
The aim of this work is to clarify the link between model \eqref{problem} taking into account the epithelial layer and models neglecting it. In particular, when the permeability between the epithelium and the lumen is large it is expected that these two regions merge, allowing to reduce system \eqref{problem} to  a model with no 
epithelial layer. 
More precisely,  as the permeabilities $P_1$ and $P_2$ grow large, 
we show rigorously that solutions of  \eqref{problem} with boundary conditions \eqref{problembord} 
relax to solutions of a reduced system with no epithelial layer.
From a mathematical point of view, system \eqref{problem} may be seen as a hyperbolic system with a stiff source term.
The source term is in some sense a relaxation of another hyperbolic system of smaller dimension. 
Such an approach has been widely studied in the literature, see e.g. \cite{JinXin,nataliniterracina,James,PST}.
Since the initial data of the starting system for fixed $\eps$ has no reason
to be compatible with the limit system, the mathematical analysis of this relaxation procedure should account for  initial layers.
In the setting of a generic relaxation problem concerning the Cauchy case for not 'well-prepared' data, or data out of equilibrium, the construction of initial layers and the corresponding error analysis can be found in \cite{GioYaYo}. 
The proof of our convergence result is obtained thanks to a $\BV$ compactness argument in space and in time.
Another difficulty is due to the presence of the boundary, which must be handled with care in the {\em a priori} estimates, in order to be uniform with respect to $\eps$, the relaxation parameter depending on the permeabilities.

\section{Main results}

Before presenting our main result, we list some assumptions which will be used throughout this paper.

\begin{assum} \label{ass.bvcond}
	We assume that the initial solute concentrations are non-negative and uniformly bounded in $L^\infty(0,L)$ and 
	with respect to the total variation~:
	\begin{equation}\label{bvcond}
	0\leq  u_{1}^{0}, u_{2}^{0}, q_{1}^{0}, q_{2}^{0}, u_{0}^{0} \in BV(0,L) \cap L^{\infty}(0,L).
	\end{equation}
\end{assum}
For  detailed definitions of the $\BV$ setting we refer to the standard text-books \cite{giusti,ziemer}.
A more recent overview gives a global picture in an extensive way \cite{HeiPaRe}, it unifies also 
the diversity of definitions found in the literature dealing with  the $\BV$ spaces in either
the probabilistic
or the deterministic context.

\begin{assum}\label{ass.boundcond}
	Boundary conditions are such that 
	\begin{equation}\label{boundcond}
	0 \leq u_b  \in BV(0,T) \cap L^{\infty}(0,T).
	\end{equation}
\end{assum}

\begin{assum}\label{ass.nonlin}
	Regularity and boundedness of $G$. \\
	We assume that the non-linear function modelling active transport in the ascending limb is an odd and  $W^{2,\infty}(\RR)$ function~:
	\begin{equation}\label{nonlin}
	\forall\, x\geq 0, \quad G(-x)=-G(x), \quad 0 \leq G(x) \leq \|G\|_\infty, \quad  0 \leq G'(x) \leq \|G'\|_\infty.
	\end{equation}
	We notice that the function $G$ defined on $\mathbb{R}^+$ by the expression in \eqref{(G)} may be straightforwardly extended by symmetry on $\mathbb{R}$ by a function satisfying \eqref{nonlin}.
\end{assum}

To simplify our notations in \eqref{problem}, we set $2\pi r_{i,e}P_{i,e}=K_i$, $i=1,2$ and $2\pi r_i P_i =k_i$ , $i=1,2$. The orders of magnitude of $k_1$ and $k_2$ are the same even if their values are not definitely equal, we may assume to further simplify the analysis that $k_1=k_2$.
We consider the case where permeability between the lumen and the epithelium is large and we set, $k_1=k_2=\frac{1}{\eps}$ for $\eps\ll 1$.
Then, we investigate the limit $\eps$ goes to zero of the solutions of  the following system~:
\begin{subequations}\label{five}
	\begin{align}
	\begin{split}\label{fivea}
	&\partial_{t}u_1^{\eps} + \alpha\partial_{x} u_1^{\eps}=\frac{1}{\eps}(q_1^{\eps}-u_1^{\eps}) 
	\end{split} \\
	\begin{split}\label{fiveb}
	&\partial_{t}u_2^{\eps} - \alpha\partial_{x} u_2^{\eps}=\frac{1}{\eps}(q_2^{\eps}-u_2^{\eps}) 
	\end{split} \\
	\begin{split}\label{fivec}
	&\partial_{t}q_1^{\eps}=\frac{1}{\eps}(u_{1}^{\eps}-q_1^{\eps}) + K_1(u_{0}^{\eps}-q_1^{\eps}) 
	\end{split} \\
	\begin{split}\label{fived}
	&\partial_{t}q_2^{\eps}=\frac{1}{\eps}(u_{2}^{\eps}-q_2^{\eps}) + K_2(u_{0}^{\eps}-q_2^{\eps})- G(q_2^{\eps})
	\end{split} \\
	\begin{split}\label{fivee}
	&\partial_{t}u_0^{\eps}=K_1(q_1^{\eps}-u_0^{\eps}) + K_{2}(q_2^{\eps}-u_0^{\eps}) + G(q_2^{\eps}) 
	\end{split}
	\end{align}
\end{subequations}

Formally, when $\eps \rightarrow 0 $, we expect the concentrations $u_1^{\eps}$ and $q_1^{\eps}$ to converge to the same function. The same happens for $u_2^{\eps}$ and $q_2^{\eps}$.
We denote $u_1$, respectively $u_2$, these limits.
Adding  \eqref{fivea} to \eqref{fivec}  and  \eqref{fiveb} to \eqref{fived}, 
we end up with the system
\begin{align*}
\p_{t} u_1^{\eps}+ \p_{t}q_1^{\eps}+ \alpha\p_{x} u_1^{\eps}= &\ K_1(u_{0}^{\eps}-q_1^{\eps}) \\
\p_{t} u_2^{\eps}+ \p_{t}q_2^{\eps}- \alpha\p_{x} u_2^{\eps}= &\ K_2(u_{0}^{\eps}-q_2^{\eps}) - G(q_2^{\eps}).
\end{align*}
Passing formally to the limit when  $\eps$ goes to 0, we arrive at
\begin{align}
2\p_{t} u_1+ \alpha\p_{x} u_1= &\ K_1(u_{0}-u_1)   \label{systlim1} \\
2\p_{t} u_2-\alpha\p_{x} u_2 = &\ K_2(u_{0}-u_2) - G(u_2), \label{systlim2}
\end{align}
coupled to the equation for the concentration in the interstitium obtained by passing into the limit in equation \eqref{fivee}
\begin{equation}\label{systlim3}
\p_{t} u_0=K_1(u_1-u_0) + K_{2}(u_2-u_0) + G(u_2).
\end{equation}
This system is complemented with the initial and boundary conditions
\begin{align}\label{initlim}
u_1(0,x) = u_1^0(x)+q_1^0(x), \quad u_2(0,x) = u_2^0(x)+q_2^0(x),
\quad u_0(0,x) = u_0^0(x), \\
u_1(t,0) = u_b(t), \quad u_2(t,L) = u_1(t,L). \label{bordlim}
\end{align}
Finally, we recover a simplified system for only three unknowns. From a physical point of view this means fusing the epithelial layer with the lumen. It turns out to merge the lumen and the epithelium into a single domain when we consider the limit of infinite permeability.
The aim of this paper is to make this formal computation rigorous.
For this sake, we define  weak solutions associated to the limit system \eqref{systlim1}-\eqref{systlim3} :
\begin{defi}\label{def.weak.solution.zero}
	Let $ u_{1}^{0}(x)$, $u_{2}^{0}(x)$, $u_{0}^{0}(x) \in L^1(0,L) \cap L^{\infty}(0,L)$ and $u_b(t) \in L^1(0,T)\cap L^{\infty}(0,T)$.
	We say that $U(t,x)=(u_1(t,x)$, $u_2(t,x)$, $u_0(t,x))^\top\in L^\infty((0,T);L^1(0;L)\cap L^\infty(0,L))^3$ is a weak solution of system \eqref{systlim1}-\eqref{systlim3}
	if for all $\bphi \in \cS_3$, with
	$$\mathcal{S}_3:=\left\{\bphi \in C^{1}([0,T]\times[0,L])^3, \quad \bphi(T,x)=0 ,\; \phi_1(t,L)=\phi_2(t,L) ,\text{ and }  \phi_2(t,0)=0\right\}, $$
	we have
	\begin{equation}\label{eq.weak.form.zero}
	\begin{aligned}
	&\int_{0}^{T} \int_{0}^{L} u_1(2 \p_t \phi_1+\alpha \p_x \phi_1) dxdt + \alpha \int_{0}^{T} u_b(t)\phi_1(t,0)\ dt
	+ \int_{0}^{L} u_1^0(x)\phi_1(0,x)\ dx \\
	&+  \int_{0}^{T} \int_{0}^{L} u_2( 2 \p_t \phi_2-\alpha \p_x \phi_2) 
	+ \int_{0}^{L} u_2^0(x)\phi_2(0,x)\ dx \\
	& + \int_{0}^{T} \int_{0}^{L} u_0 \p_t \phi_3  
	+K_1(u_1-u_0)  (\phi_3 -\phi_1 )
	+K_2 (u_2-u_0)  (\phi_3 -\phi_2 ) 
	+ G(u_2) (\phi_3- \phi_2)\ dxdt \\
	&  +\int_{0}^{L} u_0^0(x) \phi_3(0,x)\ dx=0. \\ 
	\end{aligned}
	\end{equation}
	
\end{defi}

More precisely, the main result reads
\begin{theo} \label{th:conv}
	Let $T>0$ and $L>0$. We assume that initial data and boundary conditions satisfy \eqref{bvcond}, \eqref{boundcond}, \eqref{nonlin}. Then, the weak solution $(u_1^{\eps},u_2^{\eps},q_1^{\eps},q_2^{\eps},u_0^{\eps})$ of system \eqref{five} with boundary and initial conditions \eqref{problembord} converges, as $\eps$ goes to zero, to the weak solution of reduced (or limit) problem \eqref{systlim1}--\eqref{systlim3} complemented with \eqref{initlim}--\eqref{bordlim}. More precisely,
	\begin{align*}
	& u_i^{\eps}  \xrightarrow[\eps \to 0]{} u_i \quad i=0,1,2, \quad \text{ strongly in } L^{1}([0,T]\times[0,L]), \\
	& q_j^{\eps}  \xrightarrow[\eps \to 0]{} u_j \quad j=1,2, \quad \text{ strongly in } L^{1}([0,T]\times[0,L]),
	\end{align*}
	where $(u_1,u_2,u_0)$ is the unique weak solution of the limit problem \eqref{systlim1}--\eqref{systlim3} 
	in the sense of Definition \ref{def.weak.solution.zero}.
\end{theo}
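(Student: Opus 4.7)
The plan is to establish estimates on $(u_1^\eps,u_2^\eps,q_1^\eps,q_2^\eps,u_0^\eps)_{\eps>0}$ that are uniform in $\eps$ --- $L^\infty$ bounds, non-negativity, and $\BV$ bounds in both space and time --- then extract by compactness a subsequence converging strongly in $L^1((0,T)\times(0,L))$, and finally pass to the limit in the weak form of \eqref{five} to recover Definition~\ref{def.weak.solution.zero}. The semilinear hyperbolic structure is exploited via the characteristics method: $u_1^\eps,u_2^\eps$ are represented by Duhamel formulas along the lines $x\mp\alpha t$, while at frozen $x$ the triplet $(q_1^\eps,q_2^\eps,u_0^\eps)$ solves a non-autonomous ODE system driven by $u_1^\eps,u_2^\eps$. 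Non-negativity and $L^\infty$ bounds are obtained by testing against $(\cdot)_-$ and $(\cdot-M)_+$, respectively: once the five equations are summed with matching signs, the stiff exchange terms $\eps^{-1}(q_i^\eps-u_i^\eps)$ cancel between $u_i$ and $q_i$, and Gronwall closes the estimate using Assumptions~\ref{ass.bvcond}--\ref{ass.nonlin}.

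\textbf{Space and time $\BV$; initial layer.} Differentiating \eqref{five} in $x$ (after a suitable regularisation) and taking absolute values through a Kato-type inequality yields inequalities for $|\p_x u_i^\eps|$, $|\p_x q_i^\eps|$, $|\p_x u_0^\eps|$; summing them makes the stiff contributions cancel in pairs, and what remains is controlled by the $K_i$, by $\|G'\|_\infty$, and by boundary fluxes at $x=0$ and $x=L$ that are absorbed using $\|u_b\|_{\BV(0,T)}$ (Assumption~\ref{ass.boundcond}) together with the internal coupling $u_1^\eps(t,L)=u_2^\eps(t,L)$. Gronwall then closes the space-$\BV$ estimate uniformly in $\eps$. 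The genuine difficulty is the time-$\BV$ bound, because when the data are out of equilibrium $\p_t u_i^\eps(0^+,\cdot)=\eps^{-1}(q_i^0-u_i^0)+O(1)$ is not bounded in $L^1$ uniformly in $\eps$. My plan is to introduce an initial-layer corrector $U_i^{\mathrm{lay}}(t/\eps,x)$, defined by freezing $x$ and solving the fast relaxation ODE in $\tau=t/\eps$ with data $(u_i^0(x),q_i^0(x))$, which decays exponentially in $\tau$ to the equilibrium $\tfrac{1}{2}(u_i^0(x)+q_i^0(x))$ on the diagonal $u=q$. Subtracting this corrector produces a remainder whose time derivative is uniformly bounded and to which the space-$\BV$ estimate transfers, yielding a time-$\BV$ bound that is uniform in $\eps$ once the layer has faded; the corrector itself is $O(\eps)$ in $L^1((0,T)\times(0,L))$ and hence harmless in the limit. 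The most delicate bookkeeping is to propagate the layer structure through the coupling $u_1^\eps(t,L)=u_2^\eps(t,L)$ along the $\pm\alpha$ characteristics, since the inflow of $u_2^\eps$ at $x=L$ inherits the layer behaviour of $u_1^\eps$ at $x=L$; this is the step I expect to be the main obstacle.

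\textbf{Compactness, passage to the limit, uniqueness.} The combined $L^\infty$ and $\BV$ bounds give, through Helly's selection theorem in $x$ and an Ascoli-type argument in $t$, strong $L^1$ compactness of each of $u_i^\eps$, $q_j^\eps$, $u_0^\eps$ along a subsequence. Multiplying \eqref{fivec} and \eqref{fived} by $\eps$ and passing to the limit distributionally forces the limits of $q_j^\eps$ and $u_j^\eps$ to coincide, so the five limits coalesce into three functions $(u_1,u_2,u_0)$. Adding \eqref{fivea}+\eqref{fivec} and \eqref{fiveb}+\eqref{fived} in weak form makes the stiff terms cancel identically; the Lipschitz character of $G$ (Assumption~\ref{ass.nonlin}) combined with the strong convergence of $q_2^\eps$ handles the nonlinearity $G(q_2^\eps)\to G(u_2)$; and the summed initial data $u_i^0+q_i^0$ appear naturally in the distributional formulation, matching \eqref{initlim}. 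Finally, a Kato doubling-of-variables argument adapted to the non-standard boundary condition $u_2(t,L)=u_1(t,L)$ yields uniqueness of the weak solution of the limit system, which upgrades the subsequential convergence to convergence of the full family and completes the proof.
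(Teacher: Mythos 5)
Your architecture is essentially the paper's: Stampacchia for non-negativity and $L^\infty$, an exponentially decaying initial-layer corrector in $\tau=t/\eps$ to tame the singular time derivative of out-of-equilibrium data, $L^1$ compactness, identification of $\lim q_j^\eps$ with $\lim u_j^\eps$, and cancellation of the stiff terms in the weak form by pairing test functions. Two of your steps, however, do not go through as written. The first is the space-$\BV$ bound. If you differentiate \eqref{fivea} in $x$ and integrate $|\p_x u_1^\eps|$ over $(0,L)$, the transport term leaves $+\alpha|\p_x u_1^\eps(t,0)|$ on the bad side of the inequality; the boundary condition prescribes $u_1^\eps(t,0)=u_b(t)$ but not its $x$-derivative, and expressing it through the equation gives $\alpha\,\p_x u_1^\eps(t,0)=-u_b'(t)+\tfrac1\eps\bigl(q_1^\eps(t,0)-u_b(t)\bigr)$, whose stiff part is not uniformly controlled without a separate boundary-layer analysis at $x=0$ (and symmetrically for $u_2^\eps$ at $x=L$). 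Gronwall therefore does not close. The paper avoids differentiating in $x$ altogether: adding \eqref{fivea} to \eqref{fivec} and \eqref{fiveb} to \eqref{fived} cancels the stiff terms algebraically and yields $\alpha\p_x u_1^\eps=K_1(u_0^\eps-q_1^\eps)-\p_t u_1^\eps-\p_t q_1^\eps$, so the space derivatives are bounded in $L^1$ as a direct corollary of the time-$\BV$ estimate, and those of $q_i^\eps,u_0^\eps$ follow by differentiating only the ODE part (Lemma \ref{lem:estimdx}). You should replace your $x$-differentiation by this reduction.

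The second gap is the one you flag yourself and leave open: the propagation of the initial layer through $u_1^\eps(t,L)=u_2^\eps(t,L)$. This is resolved, not circumvented, in Proposition \ref{propdt}: after correction, the boundary identity makes the difference of traces reduce to the explicit layer, $v_{2,t}^\delta(t,L)-v_{1,t}^\delta(t,L)=\tfrac1\eps(\tilde u_{2,t}-\tilde u_{1,t})(t/\eps,L)$, and since $\tilde u_{i,t}(\tau,\cdot)=-(q_i^{0,\delta}-u_i^{0,\delta})e^{-2\tau}$, the change of variables $\tau=t/\eps$ turns its $\tfrac1\eps$-weighted time integral into an $O(1)$ quantity controlled by a pointwise trace of the regularized data, hence by $\|U^{0,\delta}\|_{W^{1,1}(0,L)}$; the $\BV$ statement is then recovered by letting $\delta\to0$ and lower semicontinuity. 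Two smaller remarks: your corrector relaxes to the non-decaying constant $\tfrac12(u_i^0+q_i^0)$, so you must subtract only its decaying part (equivalently, add a corrector with data $(q_i^0-u_i^0,0)$ as the paper does, so the corrected pair starts on the diagonal); and the Kato doubling-of-variables for uniqueness is unnecessary here --- the system is semilinear with linear transport, so the $L^1$ contraction from the Duhamel/fixed-point construction already gives uniqueness of the limit problem.
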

The system \eqref{five} can be seen as a particular case of the model without epithelial layer introduced and studied in \cite{tesp} and \cite{magali}. \\
A priori estimates uniform with respect to the parameter $\eps$ (accounting for permeability) are obtained in Section \ref{sec:estimates}.
We emphasize that estimates on time derivatives are more subtle due to specific boundary conditions of system and 
because one has to take care of singular initial layers.
Concerning existence and uniqueness of a solution, in previous works \cite{tesp} and \cite{magali}, 
authors proposed a semi-discrete scheme in space in order to show existence. 
In this work we propose  a fixed point theorem giving the same
result for any fixed $\eps > 0 $ in Section \ref{sec:existence theo}. The advantage of our approach is that we directly work with weak solutions
associated to \eqref{five}.
After recalling the definition of weak solution for problem \eqref{problem}, we report below the statement of Theorem \ref{th:exist}, 
and we refer to Section \ref{sec:existence theo} for the proof.

\begin{defi}\label{def.weak.solution}
	Let $ (u_{1}^{0}(x)$, $u_{2}^{0}(x)$, $q_{1}^{0}(x)$, $q_{2}^{0}(x)$, $u_{0}^{0}(x) )\in (L^1(0,L) \cap L^{\infty}(0,L))^5$ and $u_b(t) \in L^1(0,T)\cap L^{\infty}(0,T)$. Let $\eps>0$ be fixed.
	We say that $U^\e(t,x)=(u_1^\eps(t,x)$, $u_2^\eps(t,x)$, $q_1^\eps(t,x)$, $q_2^\eps(t,x)$, $u_0^\eps(t,x)) \in L^\infty((0,T);L^1(0,L)\cap L^\infty(0,L))^5$ is a weak solution of system \eqref{five} 
	if for all $\bphi=(\phi_1, \phi_2, \phi_3, \phi_4, \phi_5) \in \cS_5$,  with
	$$\cS_5:=\left\{\bphi \in C^{1}([0,T]\times[0,L])^5, \quad \bphi(T,x)=0  ,\; \phi_1(t,L)=\phi_2(t,L) ,\text{ and }  \phi_2(t,0)=0\right\}$$
	we have
	\begin{equation}\label{eq.weak.form.eps}
	\begin{aligned}
	&\int_{0}^{T} \int_{0}^{L} u_1^{\eps}(\p_t \phi_1+\alpha \p_x \phi_1) + \frac{1}{\eps}(q_1^{\eps}-u_1^{\eps}) \phi_{1}\ dxdt  + \alpha \int_{0}^{T} u_b^{\eps}(t)\phi_1(t,0)\ dt
	+ \int_{0}^{L} u_1^0(x)\phi_1(0,x)\ dx \\
	&+  \int_{0}^{T} \int_{0}^{L} u_2^{\eps}(\p_t \phi_2-\alpha \p_x \phi_2) +
	\frac{1}{\eps}(q_2^{\eps}-u_2^{\eps}) \phi_{2}\ dxdt 
	+ \int_{0}^{L} u_2^0(x)\phi_2(0,x)\ dx \\
	&+  \int_{0}^{T}  \int_{0}^{L} q_1^{\eps}(\p_t \phi_3) +K_1(u_0^{\eps}-q_1^{\eps}) \phi_3 - 
	\frac{1}{\eps}(q_1^{\eps}-u_1^{\eps})\phi_3\ dxdt 
	+\int_{0}^{L} q_1^0(x)\phi_3(0,x)\ dx  \\
	& + \int_{0}^{T} \int_{0}^{L} q_2^{\eps}(\p_t \phi_4) +K_2(u_0^{\eps}-q_2^{\eps}) \phi_4
	- \frac{1}{\eps}(q_2^{\eps}-u_2^{\eps}) \phi_4  - G(q_2^{\eps})\phi_4\ dxdt +
	\int_{0}^{L} q_2^0(x)\phi_4(0,x)\ dx \\ 
	& + \int_{0}^{T} \int_{0}^{L} u_0^{\eps}(\p_t \phi_5) +K_1(q_1^{\eps}-u_0^{\eps})  \phi_5 +K_2 (q_2^{\eps}-u_0^{\eps})  \phi_5 + G(q_2^{\eps}) \phi_5\ dxdt \\
	&  +\int_{0}^{L} u_0^0(x) \phi_5(0,x)\ dx=0. \\ 
	\end{aligned}
	\end{equation}
	
\end{defi}		

\begin{theo}[Existence]\label{th:exist}
	Under assumptions \eqref{bvcond}, \eqref{boundcond}, \eqref{nonlin} and for every fixed $\eps>0$, there exists a unique weak solution $U^\e$ of the problem \eqref{five}.
\end{theo}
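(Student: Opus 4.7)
The plan is to prove existence by Banach's fixed-point theorem applied to a suitable contraction on an exponentially weighted Banach space, exploiting the fact that for a fixed $\eps>0$ the right-hand side of \eqref{five} is globally Lipschitz in $U^\eps=(u_1^\eps,u_2^\eps,q_1^\eps,q_2^\eps,u_0^\eps)$ thanks to Assumption \ref{ass.nonlin} on $G$. Uniqueness will then follow from the same Lipschitz structure by a Grönwall argument applied to the difference of two weak solutions.

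Concretely, I would work in $X_T:=L^\infty\bigl(0,T;BV(0,L)\cap L^\infty(0,L)\bigr)^5$, so that the trace $u_1^\eps(t,L)$ entering the boundary condition $u_2^\eps(t,L)=u_1^\eps(t,L)$ is well defined, endowed with the exponentially weighted norm $\nrm{U}{\lambda}:=\sup_{0\le t\le T} e^{-\lambda t}\|U(t,\cdot)\|_{L^\infty(0,L)}$ for a parameter $\lambda>0$ to be fixed large later. Given $\widetilde U\in X_T$, I define $\Phi(\widetilde U):=U$ in two steps. \emph{(i) Transport step:} solve the two scalar linear transport equations $\p_t u_1+\alpha\p_x u_1+\ue u_1=\ue\,\widetilde q_1$ and $\p_t u_2-\alpha\p_x u_2+\ue u_2=\ue\,\widetilde q_2$ with the prescribed initial data, boundary data $u_1(t,0)=u_b(t)$ and the iterated condition $u_2(t,L)=\widetilde u_1(t,L)$, by the explicit Duhamel formula along characteristics. \emph{(ii) ODE step:} with $u_1,u_2$ frozen, solve the three ODEs for $q_1,q_2,u_0$ pointwise in $x$ by variation of constants, handling the nonlinear term $G(q_2)$ by an internal Picard iteration (permissible since $G$ is globally Lipschitz). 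A maximum-principle computation, based on the dissipative sign of the relaxation sources $\ue(\widetilde q_i-u_i)$ and $K_i(\widetilde u_0-q_i)$, together with the non-negativity of $u_b$, $K_i$ and of the initial data and the bound $0\le G\le\|G\|_\infty$, produces $L^\infty$ estimates showing that a large enough closed ball $B_R\subset X_T$ is stable under $\Phi$, while $BV$-in-$x$ regularity is preserved along characteristics and ODEs.

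For contraction, I would subtract the Duhamel and variation-of-constants representations of $\Phi(\widetilde U)$ and $\Phi(\widetilde V)$ and bound each of the five component differences in $L^\infty_x$ by a time integral of $e^{\lambda s}\nrm{\widetilde U-\widetilde V}{\lambda}$, plus, for the $u_2$-component, a boundary contribution of the form $e^{-(L-x)/(\alpha\eps)}|\widetilde u_1-\widetilde v_1|(t-(L-x)/\alpha,L)$ which is directly absorbed in the same norm. Multiplying by $e^{-\lambda t}$ and taking the supremum in $t$ yields a contraction constant of order $C(\eps,K_i,\|G'\|_\infty,\alpha,L)/\lambda$, so picking $\lambda$ large enough forces $\nrm{\Phi(\widetilde U)-\Phi(\widetilde V)}{\lambda}\le\tfrac12\nrm{\widetilde U-\widetilde V}{\lambda}$ on the whole interval $[0,T]$, bypassing any patching argument. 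The unique fixed point $U^\eps$ is then identified with a weak solution in the sense of Definition \ref{def.weak.solution} by integrating the Duhamel identities against any $\bphi\in\cS_5$ and integrating by parts: the test-function constraint $\phi_1(t,L)=\phi_2(t,L)$ is precisely what is needed to cancel the two $x=L$ boundary terms produced by the $u_1$ and $u_2$ equations (using $u_1^\eps(t,L)=u_2^\eps(t,L)$), while $\phi_2(t,0)=0$ eliminates the outgoing boundary term for $u_2$. Uniqueness follows by plugging the difference of two weak solutions into the weak formulation, deriving an $L^1$ (or $L^\infty$) inequality on each component, and applying Grönwall. The delicate point in the whole argument is the non-local boundary coupling $u_2^\eps(t,L)=u_1^\eps(t,L)$: it forces the functional framework to retain enough spatial regularity for traces to make sense, and one must verify that the iteration preserves this $BV$ regularity without spoiling the contraction constant — this is the main technical obstacle I foresee.
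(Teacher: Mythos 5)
Your overall strategy (Duhamel representation along characteristics for $u_1,u_2$, variation of constants for the ODE components, Banach--Picard fixed point, identification of the fixed point with a weak solution by testing against $\bphi\in\cS_5$) is the same as the paper's; the genuinely different ingredients are the exponentially weighted norm in time, which would replace the paper's ``small $T$ plus iteration on $[T,2T],[2T,3T],\dots$'' argument, and the choice to work in $L^\infty(0,T;BV(0,L)\cap L^\infty(0,L))$ rather than $L^\infty(0,T;L^1\cap L^\infty(0,L))$. The latter is not needed: the paper defines the trace $u_1(t,L)$ directly from the explicit integral formula \eqref{u1}, which gives pointwise values on the lateral boundary without invoking $BV$ regularity of $u_1(t,\cdot)$; keeping $BV$ in the iteration space adds the burden of checking invariance of a $BV$-ball for no real gain. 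Likewise the ``internal Picard iteration'' for $G(q_2)$ is superfluous: one may simply put $G(\tq_2)$ on the right-hand side of the $q_2$-equation, as the paper does, since $G$ is globally Lipschitz.

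There is, however, one genuine flaw in the contraction step. You impose the lagged boundary condition $u_2(t,L)=\widetilde u_1(t,L)$, so for two inputs $\widetilde U,\widetilde V$ the $u_2$-components satisfy $u_2(t,L)-v_2(t,L)=\widetilde u_1(t,L)-\widetilde v_1(t,L)$ \emph{exactly}, with no damping. In the weighted norm the corresponding term is controlled by the factor $e^{-(L-x)\left(\frac{1}{\alpha\eps}+\frac{\lambda}{\alpha}\right)}$, whose supremum over $x<L$ equals $1$ for every $\lambda$: the exponential weight in time cannot damp an instantaneous (same-$t$) boundary coupling. Hence the Lipschitz constant of your map is bounded below by $1$ on the $u_2$-component, and the claimed estimate $\nrm{\Phi(\widetilde U)-\Phi(\widetilde V)}{\lambda}\le\tfrac12\nrm{\widetilde U-\widetilde V}{\lambda}$ cannot hold. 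The fix is exactly what the paper does in \eqref{u2}: solve first for $u_1$ (which depends only on $\widetilde q_1$ and the data) and impose $u_2(t,L)=u_1(t,L)$ with the freshly computed output $u_1$; then the boundary contribution to $u_2-v_2$ is $|u_1-v_1|(s,L)$ with $s\le t$, which is already of size $O(1/\lambda)\,e^{\lambda s}\nrm{\widetilde U-\widetilde V}{\lambda}$, and the contraction goes through. (Alternatively one can show that $\Phi^2$ is a contraction, but this must be said and proved.) A second, more minor point: deriving the Gr\"onwall inequality for the difference of two \emph{weak} solutions requires justifying the multiplication by $\sgn$; the paper does this through the Lipschitz regularity of the integrated formulas along characteristics, and your argument should appeal to the same mechanism rather than manipulate the weak formulation directly.
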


\section{Proof of the existence result}\label{sec:existence theo}

		\newcommand{\bB}{{\boldsymbol{B} }}
	We define the Banach space $\bB := (L^1(0,L) \cap L^\infty(0,L))^5$.
	We prove existence using the Banach-Picard fixed point theorem (see e.g., \cite{perth} for various examples of its application). We consider a time $T>0$ (to be chosen later) and the map $\mT: X_T \rightarrow X_T$ with the Banach space 
	$X_T=L^{\infty}([0,T]; \bB)$, and we denote $\|\cdot\|_{X_T}=\sup_{t \in (0,T)} \|\cdot\|_{\bB}$.
	For a given function $\tU\in X_T$, with  $\tU= (\tu_1, \tu_2, \tq_1, \tq_2, \tu_0)$, we define $U:=\mT(\tU)$ solution to the  problem~:
	\begin{equation} \label{fiveexistence}
	\left\{ 
	\begin{aligned}
	&\partial_{t}u_1 + \alpha\partial_{x} u_1=\frac{(\tq_1-u_1)}{\eps},  \\
	&\partial_{t}u_2 - \alpha\partial_{x} u_2=\frac{(\tq_2-u_2)}{\eps}, \\
	&\partial_{t}q_1=\frac{(\tu_{1}-q_1)}{\eps} + K_1(\tu_{0}-q_1), \\
	&\partial_{t}q_2=\frac{(\tu_{2}-q_2)}{\eps} + K_2(\tu_{0}-q_2)- G(\tq_2), \\
	&\partial_{t}u_0=K_1(\tq_1-u_0) + K_{2}(\tq_2-u_0) + G(\tq_2),
	\end{aligned}
	\right.
	\end{equation}
	with initial data
	$u_{1}^{0}$, $u_{2}^{0}$, $q_{1}^{0}$, $q_{2}^{0}$, $u_{0}^{0}$ in $L^1(0,L) \cap L^{\infty}(0,L)$  and with boundary conditions
	$$
	u_{1}(t,0)=u_b(t)\geq 0\ , \quad u_{2}(t,L)=u_{1}(t,L), \quad \text{for } t > 0,
	$$
	where $u_b \in L^1(0,T) \cap L^{\infty}(0,T)$.
	
	First we define the solutions of \eqref{fiveexistence} using Duhamel's formula. 
	Under these hypothesis, we may compute $u_1$ and $u_2$ with the method of characteristics
	\begin{equation}\label{u1}
	u_1(t,x)=
	\begin{cases}
 	{u_1^0(x-\alpha t)e^{-\frac{t}{\eps}} +\frac{1}{\eps} \int_{0}^{t} e^{-\frac{t-s}{\eps}} \tq_1(x-\alpha(t-s),s)\ ds,} & \text{ if } x > \alpha t, \\
	{u_b\left(t-\frac{x}{\alpha} \right)e^{-\frac{x}{\eps \alpha}} +\frac{1}{\alpha \eps} \int_{0}^{x} e^{-\frac{1}{\alpha \eps}(x-y)} \tq_1\left(t-\frac{x-y}{\alpha},y\right)\ dy,} & \text{ if } x < \alpha t,
	\end{cases}
	\end{equation}
	with $u_1^0(x)$, and $u_b(t)$ initial and boundary condition, respectively.
	We have a similar expression for $u_2(t,x)$ with $u_2^{0}$ instead of $u_1^{0}$ and the boundary condition $u_2(t,L)=u_1(t,L)$ which  is well-defined thanks to \eqref{u1}. It reads :
	\begin{equation}\label{u2}
	u_2(t,x)=
	\begin{cases}
 	{u_2^0(x+\alpha t)e^{-\frac{t}{\eps}} +\frac{1}{\eps} \int_{0}^{t} e^{-\frac{t-s}{\eps}} \tq_2(s,x+\alpha(t-s))\ ds,} & \text{ if } x < L- \alpha t, \\
 u_1\left(t+\frac{x-L}{\alpha},L\right) e^{\frac{x-L}{\alpha \eps}} 	+ \frac{1}{\alpha \eps} \int_{x}^L e^{ \frac{x-y}{\e \alpha}} \tq_2(t+\frac{x-y}{\alpha},y) dy	& \text{ if } x>L-\alpha t.
	\end{cases}
	\end{equation}	
	Then,  for the other unknowns one simply solves a system of uncoupled ordinary differential equations leading to~:
	\begin{equation}\label{eq.ode}
	\left\{
	\begin{aligned}
	q_1(t,x)& = q_1^0(x)e^{-(\frac{1}{\eps}+K_1) t}+ \int_{0}^{t} e^{-(\frac{1}{\eps}+K_1)(t-s)} \Bigl(\frac{1}{\eps} \tu_1 +K_1 \tu_0 \Bigr)(s,x) \ ds, \\
	q_2(t,x)& = q_2^{0}(x)e^{-(\frac{1}{\eps}+K_2) t}+ \int_{0}^{t} e^{-(\frac{1}{\eps}+K_2)(t-s)} \Bigl(\frac{1}{\eps} \tu_2 +K_1 \tu_0 - G(\tq_2) \Bigr)(s,x) \ ds,\\
	u_0(t,x)& = u_0^{0}(x)e^{-(K_1+K_2) t}+ \int_{0}^{t} e^{-(K_1+K_2)(t-s)} \Bigl(K_1\tq_1 +K_2 \tq_2 +G(\tq_2) \Bigr)(s,x) \ ds.
	\end{aligned}
	\right.
	\end{equation}
	
	Using  regularity arguments as is Theorem 2.1 and Lemma 2.1 in \cite{MiOel.1}, one can show that thanks to the Lipschitz continuity of the solutions along the characteristics, 
	the previous unknowns solve the weak formulation reading~:
	\begin{equation}\label{eq.weak.duhamel}
	\left\{
	\begin{aligned}
	\int_0^T \int_0^L  - u_1 & \left( \dt + \alpha \dx  \right) \varphi_1(t,x) + \ue \left( u_1 - \tq_1 \right) \varphi_1(t,x) dx dt \\
	& + \left[ \int_0^L u_1(t,x) \varphi_1 (t,x)dt \right]_{t=0}^{t=T} + \alpha \left[ \int_0^T u_1(t,x ) \varphi_1(t,x)\right]_{x=0}^{x=L} = 0, \\
	\int_0^T \int_0^L  - u_2 & \left( \dt - \alpha \dx  \right) \varphi_2(t,x) + \ue \left( u_2 - \tq_2 \right) \varphi_2(t,x) dx dt \\
	& + \left[ \int_0^L u_2(t,x) \varphi_2 (t,x)dt \right]_{t=0}^{t=T} + \alpha \left[ \int_0^T u_2(t,x ) \varphi_2(t,x)\right]_{x=0}^{x=L} = 0, \\
	\end{aligned}
	\right.
	\end{equation}
	for any $(\varphi_1,\varphi_2) \in C^1([0,T]\times[0,L])^2$.
	Note that similar arguments as in Lemma 3.1 in \cite{MiOel.1} show that the same holds true for $|u_1|$ (resp. $|u_2|$ ) :
	\begin{equation}\label{eq.weak.abs}
	\begin{aligned}
	\int_0^T \int_0^L  - |u_1| & \left( \dt + \alpha \dx  \right) \varphi_1(t,x) + \ue \left( |u_1| - \sgn(u_1)\tq_1 \right) \varphi_1(t,x) dx dt \\
	& + \left[ \int_0^L |u_1|(t,x) \varphi_1 (t,x)dt \right]_{t=0}^{t=T} + \alpha \left[ \int_0^T |u_1|(t,x ) \varphi_1(t,x)\right]_{x=0}^{x=L} = 0.
	\end{aligned}
	\end{equation}
	The same holds also for the other unknowns $(q_i)_{i\in\{1,2\}}$ and $u_0$,
	since for the ODE part of the system \eqref{eq.ode} provides directly similar results.
	In the rest of the paper, each time that we mention 
	that we are multiplying formally by $\sgn $ each function of system \eqref{fiveexistence} in order to get :
	\begin{equation*}
	\left\{
	\begin{aligned}
	& \partial_{t}|u_1| + \alpha\p_{x} |u_1|= \frac{1}{\eps}( \sgn(u_1)\tq_1-|u_1|) \\
	&\partial_{t}|u_2| - \alpha\p_{x} |u_2|= \frac{1}{\eps}(\sgn(u_2)\tq_2-|u_2|) ,
	\end{aligned}
	\right.
	\end{equation*}
	we actually mean that these inequalities hold in the previous sense, {\em i.e.}  in the sense 
	of \eqref{eq.weak.abs}. 
	The reader should notice that the stronger regularity 
	of the integrated form \eqref{u1}, \eqref{u2} and \eqref{eq.ode}  allow
	to define the solutions on the boundaries of the domain $\partial ((0,T)\times(0,L))$.
	If these would only belong to $L^\infty ((0,T);L^1 (0,L)$ this would not make much sense.
	Now the meaning of the formal setting is well defined,  we then can proceed by writing that one has~:
	\begin{equation}\label{eq.ineq.formal}
	\left\{
	\begin{aligned}
	& \partial_{t}|u_1| + \alpha\p_{x} |u_1|\leq \frac{1}{\eps}(|\tq_1|-|u_1|) \\
	&\partial_{t}|u_2| - \alpha\p_{x} |u_2|\leq\frac{1}{\eps}(|\tq_2|-|u_2|) \\
	&\partial_{t}|q_1|\leq\frac{1}{\eps}(|\tu_{1}|-|q_1|) + K_{1}(|\tu_{0}|-|q_1|) \\
	&\partial_{t}|q_2|\leq\frac{1}{\eps}(|\tu_{2}|-|q_2|) + K_{2}(|\tu_{0}|-|q_2|)-|G(\tq_2)|\\
	&\partial_{t}|u_0|\leq K_{1}(|\tq_1|-|u_0|) + K_{2}(|\tq_2|-|u_0|) + |G(\tq_2)|. \\
	\end{aligned}
	\right.
	\end{equation}
	We have used the fact that $\sgn(G(\tq_2))=\sgn(\tq_2)$ from \eqref{nonlin}, which implies in particular $-G(\tq_2)\cdot$ $\sgn(\tq_2)= $ $-|G(\tq_2)|$ and $G(\tq_2)\cdot\sgn(u_0)\leq |G(\tq_2)|$.
	In order to obtain inequalities in the weak formulation associated to the latter system it is enough
	to choose non-negative test functions in $C^1([0,T]\times[0,L])$.
	
	Adding all equations and integrating on $[0,L]$, we obtain formally 
	\begin{align*}
	\frac{d}{dt}  \int_{0}^{L}& ( |u_1| + |u_2| + |u_0| + |q_1| + |q_2|) \leq \alpha |u_1(t,0)| \\
	& + \frac{1}{\eps}\int_{0}^{L} (|\tu_1| +|\tu_2|+|\tq_1|+|\tq_2|)\ dx + (K_1+K_2)\int_{0}^{L} |\tu_0|\ dx, 
	\end{align*}
	where we use the boundary condition $u_1(t,L)=u_2(t,L)$ and \eqref{nonlin}.
	Setting $\|U(t,\cdot)\|_{L^1(0,L)^5}:=\int_{0}^{L}(|u_1| + |u_2|  + |q_1| + |q_2|+|u_0|)(t,x)\ dx $ and integrating with respect to time, we obtain:
	\begin{equation}\label{estimU}
	\|U(t,x)\|_{L^1(0,L)^5}  \leq \|U(0,x)\|_{L^1(0,L)^5} + \alpha \int_{0}^{T}|u_b(s)|\ ds +\eta \int_{0}^{T} \|\tU(t,x)\|_{L^1(0,L)^5}\ dt,
	\end{equation}
	with $ \eta = K_1+K_2 +\frac{1}{\eps} >0 $.
	Here the formal computations are to be understood in the following manner : in the 
	weak formulation associated to \eqref{eq.ineq.formal} we choose the test function
	$\boldsymbol{\varphi} = (1,1,1,1,1)$, and the result \eqref{estimU} comes 
	in a straightforward way when neglecting the out-coming characteristic at $x=0$. 
	
	On the other hand, using \eqref{u1}, \eqref{u2} and \eqref{eq.ode}, one quickly checks that 
	\begin{equation}\label{eq.endo.linf}
	\nrm{U}{L^\infty((0,T)\times(0,L))^5} \leq 
	\max\left(\nrm{U^0}{L^\infty(0,L)^5},\nrm{u_b}{L^\infty(0,T)} \right) 
	+ \frac{C T}{\eps } \nrm{\tilde{U}}{L^\infty((0,T)\times(0,L))^5}
	\end{equation}
	where the generic constant $C$ depends only on $(K_i)_{i\in \{1,2\}}$ and $\nrm{G'}{L^\infty(\RR)}$
	but not on $\tilde{U}$ nor on the data $U^0$.
	At this step, $\mathcal{T}$ maps $X_T$ into itself.
	
	Let us now prove that $\mT$ is a contraction. 
	Let $(\tU,\tW) \in X_T^2$, we define $U:= \mT(\tU), \ W:=\mT(\widetilde{W})$.
	Then, by the same token as obtaining \eqref{estimU}, we have
	\begin{align*}
	\| \mathcal{T} (\tU) - \mathcal{T}(\tW) \|_{L^{1}(0,L)^5} =\| U-W \|_{L^1(0,L)^5}
	& \leq \eta \int_{0}^{T} \|\tU-\tW \|_{L^1(0,L)}  \\
	& \leq \eta T \| \tU-\tW \|_{X_T}.
	\end{align*}
	Again similar computations as in \eqref{eq.endo.linf}, show that
	$$
	\nrm{U-W}{L^\infty((0,T)\times(0,L))^5} \leq 
	\frac{C T}{\eps } \nrm{\tilde{U}-\tilde{W}}{L^\infty((0,T)\times(0,L))^5}.
	$$
	Therefore, as soon as $T < \min (1/\eta,\eps/C)$, $\mathcal{T}$ is a contraction in $X_T$.
	It allows to construct a solution on $[0,T]$ for $T$ small enough.
	The fixed point solves \eqref{u1} and \eqref{eq.ode} in an implicit way.
	Along characteristics  solutions have enough regularity to satisfy \eqref{five} 
	in a  weak sense \eqref{eq.weak.duhamel}. Choosing then the test functions 
	${\boldsymbol{\varphi}}:=(\varphi_i)_{i\in\{1,\dots,5\}}$ to belong to $\cS_5$ shows that
	the fixed point is a weak solution in the sense of Definition \ref{def.weak.solution}. 
	Since the solution $U(t,x)=(u_1(t,x),u_2(t,x),q_1(t,x),q_2(t,x),u_0(t,x))$ is well
	defined as on $\{T\}\times(0,L)$ thanks to regularity arguments stated above, 
	$U(T,x)$ becomes the initial condition of a new initial boundary problem.
	Thus, we may iterate this process on $[T, 2T]$, $[2T,3T]$, $\dots$, since the condition on $T$ does not depend on the iteration.
	
	As a result of above computations, we have also that if $U^1$ (resp. $U^2$) is a solution with initial data $U^{1,0}$ (resp. $U^{2,0}$) and boundary data $u_b^1$ (resp. $u_b^2$). Then we have the 
	comparison principle : 
	\begin{equation}\label{estim1}
	\| U^1 - U^2 \|_{L^1((0,T)\times(0,L))} \leq \| U^{0,1} - U^{0,2} \|_{L^1(0,L)} + \alpha \| u_b^1 - u_b^2 \|_{L^1(0,T)}.
	\end{equation}
	which shows and implies uniqueness as well.

\section{Uniform {\em a priori} estimates}\label{sec:estimates}
In order to prove our convergence result, we first establish some uniform {\em a priori} estimates. 
The strategy of the proof of Theorem \ref{th:conv} relies on a compactness argument. 
In this Section we will omit the index $\eps$ in order to simplify the notations.

\subsection{Non-negativity and $L^1\cap L^{\infty}$ estimates}

The following lemma establishes that all concentrations of system are non-negative and this is consistent with the biological framework.

\begin{lem}[Non-negativity]\label{positivity}
Let $U(t,x)$ be a weak solution of system \eqref{problem} such that the assumptions \eqref{bvcond}, \eqref{boundcond}, \eqref{nonlin} hold. Then for almost every  $(t,x) \in (0,T)\times(0,L),$ $ U(t,x)$ is non-negative, i.e.: 
$ u_1(t,x)$, $u_2(t,x)$, $q_1(t,x)$, $q_2(t,x)$, $u_0(t,x)$ $\geq 0. $ 
\end{lem}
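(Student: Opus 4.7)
I would prove the lemma by an entropy–type estimate on the sum of the negative parts of all five concentrations, in direct analogy with the $L^1$ bound \eqref{estimU} obtained in the existence proof. For a scalar $v$ set $v^-:=\max(-v,0)$ and
\[
S(t):=\int_0^L \bigl(u_1^-+u_2^-+q_1^-+q_2^-+u_0^-\bigr)(t,x)\,dx.
\]
Since assumption \eqref{bvcond} makes all initial data non-negative, $S(0)=0$, and it suffices to establish a Gronwall-type bound $\frac{d}{dt}S(t)\le C\,S(t)$ to conclude that $S\equiv 0$, which is exactly the pointwise non-negativity claimed.

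First, I would justify in the weak sense---exactly as the authors do for $|u_1|$ in \eqref{eq.weak.abs}---that the Duhamel representations \eqref{u1}, \eqref{u2}, \eqref{eq.ode} give enough regularity along characteristics (Lipschitz for $u_1,u_2$, pointwise-in-$t$ for $q_1,q_2,u_0$) to apply the chain rule to the locally Lipschitz map $v\mapsto v^-$. Formally this means multiplying the five equations of \eqref{problem} by $-\induu,-\indud,-\indvu,-\indvd,-\induz$ respectively and integrating in $x$ over $(0,L)$. The transport equations produce the boundary terms $\alpha u_1^-(t,0)-\alpha u_1^-(t,L)$ and $\alpha u_2^-(t,L)-\alpha u_2^-(t,0)$. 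By \eqref{boundcond} the hypothesis $u_b\ge 0$ forces $u_1^-(t,0)=0$; the equality $u_1(t,L)=u_2(t,L)$ from \eqref{problembord} gives $u_1^-(t,L)=u_2^-(t,L)$, so the two boundary contributions at $x=L$ cancel exactly; and $-\alpha u_2^-(t,0)\le 0$ can simply be dropped.

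Next I would estimate the remaining volumetric terms via the elementary pointwise bound $-a\,\mathbf{1}_{\{b<0\}}\le a^-$, applied to each exchange term of the form $k_i(u_i-q_i)$, $K_i(u_0-q_i)$ or $K_i(q_i-u_0)$. The nonlinearity is treated using assumption \eqref{nonlin}: since $G$ is odd with $G(0)=0$ and non-negative on $\mathbb{R}^+$, one has $G(q_2)\indvd\le 0$, which is a favorable contribution in the $q_2$ equation; in the $u_0$ equation the term $-G(q_2)\induz$ is non-positive on $\{q_2\ge 0\}$ and on $\{q_2<0\}$ is bounded by $|G(q_2)|\le\|G'\|_\infty\,q_2^-$ thanks to Lipschitz continuity and $G(0)=0$. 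Collecting all contributions, the dissipative diagonal terms $-k_iu_i^-,-K_iq_i^-,\dots$ can be dropped and one arrives at
\[
\frac{d}{dt}S(t)\le C\,S(t),\qquad C=C(k_1,k_2,K_1,K_2,\|G'\|_\infty).
\]
Gronwall's lemma then yields $S(t)\le S(0)e^{Ct}=0$, proving that $u_1^-=u_2^-=q_1^-=q_2^-=u_0^-=0$ almost everywhere on $(0,T)\times(0,L)$.

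The main obstacle is the rigorous justification of the Kruzhkov-type identity for $v^-$ in the low-regularity weak setting of Definition \ref{def.weak.solution}. This is essentially the same issue the authors have already addressed around \eqref{eq.weak.abs}, and I would reproduce their argument verbatim, simply replacing the sign function by the indicator of $\{v<0\}$; the final integration in $x$ amounts to testing the resulting identity against a smooth truncation near $t=T$ of the constant vector test function $\boldsymbol{\varphi}\equiv(1,1,1,1,1)$ and passing to the limit, much as in the derivation of \eqref{estimU}.
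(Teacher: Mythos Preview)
Your proof is correct and follows essentially the same Stampacchia-type argument as the paper: multiply each equation by the indicator of the negative set, sum, and use the boundary relations $u_b\ge 0$, $u_1(t,L)=u_2(t,L)$. The only refinement the paper makes is that, after summing the five inequalities, the linear exchange terms cancel \emph{exactly} and the nonlinear contribution appears as $G(q_2)\bigl(\indvd-\induz\bigr)$, which is $\le 0$ pointwise by the sign property of $G$; hence one gets directly $\frac{d}{dt}S(t)\le 0$ and no Gronwall constant is needed.
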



\begin{proof} We prove that the negative part of our functions vanishes. Using Stampacchia's method, we formally multiply each equation of system \eqref{five} by corresponding indicator function as follows:
\begin{equation}
\begin{cases}
(\p_{t}u_1 + \alpha\p_{x} u_1) \induu =\frac{1}{\eps}(q_1-u_1) \induu\\
(\p_{t}u_2 - \alpha\p_{x} u_2) \indud=\frac{1}{\eps}(q_{2}-u_2) \indud \\
(\p_{t}q_{1})\indvu=\frac{1}{\eps}(u_{1}-q_1)\indvu +K_{1}(u_{0}-q_1) \indvu \\
(\p_{t}q_{2})\indvd=\frac{1}{\eps}(u_{2}-q_{2})\indvd + K_{2}(u_{0}-q_{2})\indvd- G(q_{2})\indvd\\
(\p_{t}u_0)\induz=K_{1}(q_1-u_0)\induz + K_{2}(q_{2}-u_0)\induz+G(q_{2})\induz.
\nonumber
\end{cases}
\end{equation}
again as in the proof of existence in Section \ref{sec:existence theo}, these computations can be made rigorously 
using the extra regularity provided along characteristics in the spirit of Lemma 3.1, \cite{MiOel.1}.

We remember that for each function $u$ we can define positive and negative parts as  $u^{+}=\max(u,0)$, $u^{-}=\max(-u,0)$.
One has obviously that $u_i^{-}= -u_i \mathbf{1}_{\{u_{i}<0\}}$ for any $u \in L^1_{\rm loc}((0,T)\times(0,L))$,
whereas along characteristics curves one has du to Lipschitz continuity of the solutions that
It is possible also to write in the distributional sense:
$$
\p_t u_i^{-}=-\p_{t} u_{i} \mathbf{1}_{\{u_{i}<0\}} \quad i=0,1,2. 
$$
We refer again to Lemma 3.1 \cite{MiOel.1} for more detailed  explanations.
The same is true for other functions $q_j $ with $j=1,2$. 

Taking into account the fact that:
$$ q_i \mathbf{1}_{\{u_{i}<0\}}=  (q_{i}^{+}-q_{i}^{-}) \mathbf{1}_{\{u_{i}<0\}} \geq -q_{i}^{-}, \quad i=1,2, $$
since $q_{i}^{-} \mathbf{1}_{\{u_{i}<0\}}$ is zero or positive by definition of negative part, we obtain :  
$$
\begin{cases}
\p_{t}u_1^{-} +\alpha\p_{x} u_1^{-} \leq \frac{1}{\eps}(q_1^{-}-u_1^{-}) \\
\p_{t}u_2^{-} - \alpha\p_{x} u_2^{-}\leq\frac{1}{\eps}(q_{2}^{-}-u_2^{-})  \\
\p_{t}q_{1}^{-}\leq\frac{1}{\eps}(u_{1}^{-}-q_1^{-})+K_{1}(u_{0}^{-}-q_1^{-}) \\
\p_{t}q_{2}^{-}\leq\frac{1}{\eps}(u_{2}^{-}-q_{2}^{-})+ K_{2}(u_{0}^{-}-q_{2}^{-})+ G(q_{2})\indvd \\
\p_{t}u_0^{-}\leq K_{1}(q_1^{-}-u_0^{-}) + K_{2}(q_{2}^{-}-u_0^{-})-G(q_{2})\induz.
\end{cases}
$$
Adding the previous expressions, one recovers a single inequality reading
$$
\p_t (u_1^{-} +  q_{1}^{-}+  q_{2}^{-}	+ u_2^{-} + u_0^{-}) +\alpha \p_x (u_1^{-}- u_2^{-})  \leq G(q_2)(\indvd - \induz).
$$

By Assumption \ref{ass.nonlin}, we have that $\sgn(G(q_2))=\sgn(q_2)$. Thus $G(q_2)(\indvd - \induz) =$ 
$ G(q_2)$ $(\mathbf{1}_{\{G(q_{2})<0\}} - \induz) \leq 0$.
Then integrating on the interval $[0,L]$, we get :
$$
\frac{d}{dt}\int_{0}^{L} (u_1^{-} +q_{1}^{-}+ q_{2}^{-}+u_2^{-}+u_0^{-})(t,x)\ dx \leq \alpha ( u_2^{-}(t,L)- u_2^{-}(t,0) - u_1^{-}(t,L) + u_1^{-}(t,0)).
$$
Since $u_{1}^{-}(t,L)=u_{2}^{-}(t,L) $ thanks to condition \eqref{boundcond}, it follows:			
$$
\frac{d}{dt}\int_{0}^{L}( u_1^{-} +q_{1}^{-}+ q_{2}^{-}+u_2^{-}+u_0^{-})(t,x)\ dx \leq \alpha u_1^{-}(t,0) = \alpha u_b^{-}(t).
$$
From Assumptions \ref{ass.boundcond} and \ref{ass.bvcond}, the initial and boundary data are 
all non-negative.
Thus $ u_1^{-}(0,x)$,$q_{1}^{-}(0,x)$, $q_{2}^{-}(0,x)$,	$u_2^{-}(0,x)$,$u_0^{-}(0,x)$ are necessarily  zero.
This proves solutions' non-negativity and  concludes the proof.
\end{proof}


\begin{lem}[$L^\infty$ bound]\label{lem:infty}
Let $(u_1, u_2, q_1, q_2, u_0)$ be the unique weak solution of problem \eqref{five}. 
Assume that \eqref{bvcond}, \eqref{boundcond}, \eqref{nonlin} hold, then it is bounded {\em i.e.} 
for a.e. $(t,x)\in (0,T)\times (0,L)$,
$$
0 \leq u_{0}(t,x)\leq \ka(1+t),  \quad 0 \leq u_{i} (t,x)\leq \ka(1+t), \quad0 \leq q_{i} (t,x)\leq \ka(1+t), \quad i=1,2,
$$
$$
0 \leq u_2(t,0) \leq \ka(1+t), \quad 0\leq u_1(t,L) \leq \ka(1+t),
$$
where the constant $\ka \geq \max\left\{ \|G\|_{\infty}, \|u_b\|_{\infty}, \|u_0^0\|_{\infty}, \|u_i^0\|_{\infty}, \|q_i^0\|_{\infty}, i\in\{1,2\}   \right\}$.
\end{lem}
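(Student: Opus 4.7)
The plan is to adapt the Stampacchia argument of Lemma \ref{positivity} to the shifted quantities $u_i-\kappa(1+t)$, $q_j-\kappa(1+t)$, $u_0-\kappa(1+t)$. The affine comparison function $\kappa(1+t)$ is designed so that its time derivative $\kappa$ absorbs the nonlinear source $G(q_2)$, which by \eqref{nonlin} contributes at most $\|G\|_\infty\leq\kappa$. Just as in Section \ref{sec:existence theo}, the Stampacchia manipulations are justified at the weak level thanks to the Lipschitz regularity along characteristics provided by the Duhamel representations \eqref{u1}--\eqref{eq.ode} (in the spirit of Lemma 3.1 in \cite{MiOel.1}).

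Concretely, I would multiply each equation of \eqref{five} by the corresponding indicator $\induuk$, $\indudk$, $\indvuk$, $\indvdk$, $\induzk$, and use that, for instance, $(q_1-\kappa(1+t))\induuk\leq z_1:=(q_1-\kappa(1+t))^+$ while $(u_1-\kappa(1+t))\induuk=w_1:=(u_1-\kappa(1+t))^+$. Writing also $w_2,w_0,z_2$ analogously, this yields the five distributional inequalities
\begin{align*}
\p_t w_1+\alpha\p_x w_1&\leq \tfrac{1}{\eps}(z_1-w_1)-\kappa\induuk,\\
\p_t w_2-\alpha\p_x w_2&\leq \tfrac{1}{\eps}(z_2-w_2)-\kappa\indudk,\\
\p_t z_1&\leq \tfrac{1}{\eps}(w_1-z_1)+K_1(w_0-z_1)-\kappa\indvuk,\\
\p_t z_2&\leq \tfrac{1}{\eps}(w_2-z_2)+K_2(w_0-z_2)-G(q_2)\indvdk-\kappa\indvdk,\\
\p_t w_0&\leq K_1(z_1-w_0)+K_2(z_2-w_0)+G(q_2)\induzk-\kappa\induzk.
\end{align*}
Summing and using that the exchange terms $\tfrac{1}{\eps}(\cdot-\cdot)$ and $K_i(\cdot-\cdot)$ cancel in a sign-favourable way, the right-hand side is bounded by
$$-\kappa\bigl(\induuk+\indudk+\indvuk+\indvdk+\induzk\bigr)+G(q_2)\bigl(\induzk-\indvdk\bigr),$$
and since $0\leq G(q_2)\leq\|G\|_\infty\leq\kappa$, the residual $G$-term is absorbed by the $-\kappa\induzk$ contribution, giving a non-positive right-hand side.

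Integrating in space over $(0,L)$, the transport flux produces boundary contributions $\alpha\bigl(w_1(t,L)-w_2(t,L)\bigr)-\alpha\bigl(w_1(t,0)-w_2(t,0)\bigr)$. The Dirichlet condition $u_1(t,0)=u_b(t)$ together with $\|u_b\|_\infty\leq\kappa$ gives $w_1(t,0)=0$, while $u_1(t,L)=u_2(t,L)$ yields $w_1(t,L)=w_2(t,L)$, so what remains is $-\alpha w_2(t,0)\leq 0$. Hence $\tfrac{d}{dt}\int_0^L(w_1+w_2+z_1+z_2+w_0)(t,x)\,dx\leq 0$. Since the initial data satisfy $u_i^0,q_j^0,u_0^0\leq\kappa$, the integral vanishes at $t=0$, and therefore remains zero for all $t\in[0,T]$. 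Non-negativity of each summand then forces $w_i=z_j=w_0=0$ a.e., which is exactly the claimed bound. Finally, the pointwise bounds on the traces $u_1(t,L)$ and $u_2(t,0)$ follow directly from the Duhamel formulas \eqref{u1}--\eqref{u2} and the interior $L^\infty$ bound just obtained, since those expressions represent each trace as a convex-type combination of values of data and of $q_i$ already dominated by $\kappa(1+t)$.

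The only delicate point I expect is the rigorous justification of the Stampacchia step for weak solutions (handling the indicators on the $q_i$-equations, which are pure ODEs in $x$, is straightforward, but the transport equations require invoking the regularity along characteristics as in \cite{MiOel.1}); and keeping careful track of signs so that the $G(q_2)$ term is absorbed by $-\kappa\induzk$ rather than generating exponential growth — this is precisely what dictates the lower bound $\kappa\geq\|G\|_\infty$ in the statement.
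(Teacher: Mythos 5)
Your proof is correct and, for the interior bounds, is essentially the paper's argument: the same shifted quantities $w_i=u_i-\ka(1+t)$, $z_j=q_j-\ka(1+t)$, the same Stampacchia multiplication by indicators justified via the regularity along characteristics, the same pairwise cancellation of the exchange terms after summing, the same absorption of $G(q_2)\leq\|G\|_\infty\leq\ka$ by the $-\ka\induzk$ contribution (using $q_2\geq 0$ from the non-negativity lemma so that $-G(q_2)\indvdk\leq 0$), and the same treatment of the transport flux using $w_1^+(t,0)=(u_b(t)-\ka(1+t))^+=0$ and $w_1^+(t,L)=w_2^+(t,L)$. The only place you genuinely diverge is the two trace bounds. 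The paper obtains $u_2(t,0)\leq\ka(1+t)$ by keeping the term $\alpha w_2^+(t,0)$ on the left of the integrated inequality, so that $\int_0^T w_2^+(t,0)\,dt\leq 0$, and obtains $u_1(t,L)\leq\ka(1+t)$ by summing only the first and third inequalities of the system, integrating over $(0,T)\times(0,L)$, and using the already-established vanishing of $w_0^+$ and $z_1^+$. You instead read both traces off the Duhamel representations \eqref{u1}--\eqref{u2}: since the exponential weights there sum to one, each trace is a convex combination of boundary/initial data (bounded by $\ka$) and of interior values of $q_i$ at earlier times (bounded by $\ka(1+s)$, $s\leq t$); note you must treat $u_1(t,L)$ first, since the formula for $u_2(t,0)$ when $t>L/\alpha$ involves that trace. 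Both routes are valid; yours is arguably more direct, at the price of a standard Fubini-type remark ensuring that the a.e.\ interior bound on $q_i$ can be used along a.e.\ characteristic line, whereas the paper's argument stays entirely within the integrated differential inequalities.
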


\begin{proof}
We use the same method as in the previous lemma for the functions
$$
w_i=(u_i-\ka(1+t)), \quad i=0,1,2, \quad z_j=(q_j-\ka(1+t)), \quad j=1,2.
$$
From system \eqref{five} and using the fact that 
$$ z_j{\mathbf{1}_{\{w_{i}\geq 0\}}}=z_j^{+}{\mathbf{1}_{\{w_{i}\geq 0\}}}-z_j^{-}{\mathbf{1}_{\{w_{i}\geq 0\}}}
\leq z_j^{+}, \quad\quad w_i{\mathbf{1}_{\{z_{j}\geq 0\}}}\leq w_i^{+},$$ we get
\begin{equation}\label{systLinf}
\left\{
\begin{aligned}
&\p_{t}w_1^{+} +\ka\indw +\alpha\p_{x} w_1^{+}\leq \frac{1}{\eps}(z_1^{+}-w_1^{+}) \\
&\p_{t}w_2^{+} +\ka\indww - \alpha\p_{x} w_2^{+}\leq \frac{1}{\eps}(z_2^{+}-w_2^{+}) \\
&\p_{t}z_1^{+}+\ka\indz \leq \frac{1}{\eps}(w_{1}^{+}-z_1^{+}) + K_1(w_{0}^{+}-z_1^{+}) \\
&\p_{t}z_2^{+}+\ka\indzz \leq \frac{1}{\eps}(w_{2}^{+}-z_2^{+}) + K_2(w_{0}^{+}-z_2^{+})- G(q_2)\indzz \\
&\p_{t}w_0^{+}+\ka\indwz \leq K_1(z_1^{+} -w_0^{+}) + K_{2}(z_2^{+}-w_0^{+}) + G(q_2)\indwz.
\end{aligned}	
\right.
\end{equation}
Adding expressions above gives 
\begin{align*}
\p_{t}(w_1^{+} + w_2^{+} + z_1^{+} + z_2^{+}) +\alpha \p_{x} (w_1^{+}- w_2^{+}) \leq
\  -\ka \indwz 
\ + G(q_2)(\indwz-\indzz).
\end{align*}
Integrating with respect to $x$ yields
\begin{align*}
&\frac{d}{dt}\int_{0}^{L} (w_1^{+} +w_2^{+} +z_1^{+} +z_2^{+}+w_{0}^{+})(t,x)\,dx \\
& \leq  \alpha( w_2^{+}(t,L)-w_{2}^{+}(t,0)- w_1^{+}(t,L)+w_1^{+}(t,0))  
+ \int_{0}^{L} (G(q_2) - \ka) \indwz\,dx,
\end{align*}
where we use the fact that $G(q_2)\geq 0$ from assumption \eqref{nonlin} since $q_2\geq 0$ thanks to the previous lemma.
From the boundary conditions in \eqref{problem}, we have for all $t\geq 0$, 
$w_2^{+}(t,L)=[u_2(t,L)- \ka (1+t)]^{+}=[u_1(t,L)- \ka (1+t)]^{+}=w_1^{+}(t,L)$.
Then,
\begin{align*}
\frac{d}{dt}\int_{0}^{L} (w_1^{+} +w_2^{+} +z_1^{+} +z_2^{+}+w_{0}^{+})(t,x)\,dx
+ \alpha w_2^+(t,0) \\
\leq 
\alpha (u_b(t)-\kappa(1+t))^{+} 
+ (\|G\|_{\infty}-\ka)\int_{0}^{L} \indwz \,dx.
\end{align*}
If we adjust the constant $\ka$ such that $\ka \geq \max\left\{ \|G\|_{\infty}, \|u_b\|_{\infty} \right\}$, it implies that~:

$$ \frac{d}{dt}\int_{0}^{L} (w_1^{+} +w_2^{+} +z_1^{+} +z_2^{+}+w_{0}^{+})(t,x)\,dx
+ \alpha w_2^+(t,0) \leq 0,
$$
which shows the claim.

For the last estimate on $u_1(t,L)$, we sum the first and the third inequalities of the system \eqref{systLinf} and integrate on $(0,L)$,
\begin{align*}
\frac{d}{dt} \int_0^L (w_1^+ + z_1^+)\,dx + \alpha w_1^+(t,L) \leq\ 
& \alpha w_1^+(t,0) + K_1 \int_0^L (w_0^+ - z_1^+)\,dx \\
& - \ka \int_0^L(\indw+\indz)\,dx.
\end{align*}
Integrating on $(0,T)$ and since we have proved above that $w_0^+=0$ and $z_1^+=0$, we arrive at
$$
\alpha \int_0^T w_1^+(t,L)\,dt \leq \alpha \int_0^T w_1^+(t,0)\,dt = 0,
$$
for $\ka\geq \|u_b\|_\infty$.
\end{proof}

\begin{lem}[$L^\infty_t L^1_x$ estimates]\label{prop}
Let $T>0$ and let $(u_1,u_2,q_1,q_2,u_0)$ be a weak solution of system \eqref{problem} in
$\big(L^{\infty}([0,T]; (L^1 \cap L^{\infty})(0,L))\big)^5.$
We define:
$$\mathcal{H}(t)=\int_{0}^{L} (|u_1|+|u_2|+|u_0|+|q_1|+|q_2|)(t,x)\ dx.$$
Then, under hypothesis \eqref{bvcond}, \eqref{boundcond}, \eqref{nonlin} the following a priori estimate, uniform in $\eps>0$, holds:
$$
\mathcal{H}(t) 
\leq \alpha \|u_b\|_{L^{1}(0,T)} +\mathcal{H}(0), \quad \forall t > 0.
$$
Moreover the following inequalities hold:
\begin{align*}
\int_{0}^{T} |u_2(t,0)|\,dt \leq \|u_b\|_{L^{1}(0,T)} + \frac{1}{\alpha}\mathcal{H}(0),
\end{align*}
and
\begin{align*}
\int_{0}^{T} |u_1(t,L)|\,dt \leq \int_{0}^{L} (|u_1^{0}(x)|+|q_1^{0}(x)|)\,dx + CT 
\end{align*}
with $C>0$ constant.
\end{lem}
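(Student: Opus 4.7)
The proof rests on two observations: by Lemma \ref{positivity}, all components are non-negative so absolute values drop out and $\mathcal{H}(t) = \int_0^L (u_1+u_2+q_1+q_2+u_0)\,dx$; moreover, the couplings in \eqref{five} are structured so that suitable linear combinations of the five equations cancel both the stiff $1/\eps$ exchanges and the $K_i$ transmural fluxes (and the two occurrences of $G(q_2)$). This pairwise cancellation is precisely what makes the $L^1$ bound $\eps$-uniform.

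For the first two estimates, the plan is to add all five equations of \eqref{five}. The $1/\eps(u_i-q_i)$ and $1/\eps(q_i-u_i)$ terms cancel in pairs, so do the $K_i(u_0-q_i)$ and $K_i(q_i-u_0)$ terms, as do the two copies of $G(q_2)$. What remains is
\begin{equation*}
\partial_t(u_1+u_2+q_1+q_2+u_0) + \alpha\,\partial_x(u_1 - u_2) = 0.
\end{equation*}
Integrating in $x$ over $(0,L)$ and using $u_1(t,0)=u_b(t)$ together with the matching condition $u_1(t,L)=u_2(t,L)$ gives
\begin{equation*}
\frac{d}{dt}\mathcal{H}(t) + \alpha\, u_2(t,0) = \alpha\, u_b(t).
\end{equation*}
Since $u_2(t,0) \geq 0$, integrating in time from $0$ to $t$ and dropping the outflow term produces the first inequality; integrating from $0$ to $T$ and keeping that term, then dividing by $\alpha$ and using $\mathcal{H}(T) \geq 0$, yields the second.

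For the third estimate, I would add only equations \eqref{fivea} and \eqref{fivec}, which cancels just the $u_1$–$q_1$ stiff coupling and leaves
\begin{equation*}
\partial_t(u_1+q_1) + \alpha\,\partial_x u_1 = K_1(u_0 - q_1).
\end{equation*}
Integrating over $(0,L)\times(0,T)$, discarding the non-negative final-time contribution $\int_0^L(u_1(T)+q_1(T))\,dx$ and using $u_1(t,0)=u_b(t)$ together with $q_1\geq 0$, one obtains
\begin{equation*}
\alpha \int_0^T u_1(t,L)\,dt \leq \int_0^L (u_1^0+q_1^0)\,dx + \alpha \|u_b\|_{L^1(0,T)} + K_1 \int_0^T\!\!\int_0^L u_0\,dx\,dt.
\end{equation*}
The last term is controlled by the first estimate through $\int_0^L u_0\,dx \leq \mathcal{H}(t) \leq \mathcal{H}(0) + \alpha\|u_b\|_{L^1(0,T)}$, hence the double integral is bounded by $T(\mathcal{H}(0) + \alpha\|u_b\|_{L^1(0,T)})$. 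Absorbing all $\eps$-independent quantities (depending on $\alpha$, $K_1$, $\|u_b\|_\infty$ and $\mathcal{H}(0)$) into a single constant $C$ gives the stated bound.

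Rigour-wise, these algebraic manipulations are formal and must be interpreted through the weak formulation \eqref{eq.weak.form.eps}, taking the constant test functions $\boldsymbol{\varphi}=(1,1,1,1,1)$ for the first two claims and $\boldsymbol{\varphi}=(1,0,1,0,0)$ for the third; the along-characteristics regularity established in Section \ref{sec:existence theo} gives meaning to the boundary traces that appear. I do not expect a serious obstacle: the whole content of the lemma is the bookkeeping of cancellations, and the one essential check—that no $1/\eps$ factor leaks into any bound—is automatic once the correct linear combinations of equations are used.
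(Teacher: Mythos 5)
Your proposal is correct and follows essentially the same route as the paper: non-negativity removes the absolute values, summing all five equations cancels the stiff, transmural, and $G$ terms to give $\frac{d}{dt}\mathcal{H}(t)+\alpha u_2(t,0)=\alpha u_b(t)$ (yielding the first two bounds), and summing \eqref{fivea} with \eqref{fivec} plus the already-established control of $\int_0^L u_0\,dx$ gives the trace bound at $x=L$. The only cosmetic difference is that you spell out the absorption of $\alpha\|u_b\|_{L^1}$ and the $K_1$ term into the constant $C$, which the paper leaves implicit.
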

\begin{proof}
Since from Lemma \ref{positivity} all concentrations are non-negative, we may write from system \eqref{five}
\begin{equation}\label{mod}
\begin{cases}
\partial_{t}|u_1| + \alpha\p_{x} |u_1| = \frac{1}{\eps}(|q_1|-|u_1|) \\
\partial_{t}|u_2| - \alpha\p_{x} |u_2| = \frac{1}{\eps}(|q_2|-|u_2|) \\
\partial_{t}|q_1| = \frac{1}{\eps}(|u_{1}|-|q_1|) + K_{1}(|u_{0}|-|q_1|) \\
\partial_{t}|q_2| = \frac{1}{\eps}(|u_{2}|-|q_2|) + K_{2}(|u_{0}|-|q_2|)- |G(q_2)|\\
\partial_{t}|u_0| = K_{1}(|q_1|-|u_0|) + K_{2}(|q_2|-|u_0|) + |G(q_2)|.
\end{cases}
\end{equation}
Adding all equations and integrating on $(0,L)$, we get, recalling the boundary condition $u_1(t,L)=u_2(t,L)$,
\begin{equation}\label{est}
\frac{d}{dt}\mathcal{H}(t) + \alpha|u_{2}(t,0)| = \alpha |u_1(t,0)|
= \alpha |u_b(t)|.
\end{equation}
Integrating now with respect to time, we obtain:
\begin{equation}\label{estimate}
\mathcal{H}(t) + \alpha \int_0^t |u_{2}(s,0)|\,ds \leq \alpha \int_{0}^{t}|u_b(s)|\ ds +\mathcal{H}(0). 
\end{equation}
with $\mathcal{H}(t)$ previously defined. It gives the first two estimates of the Lemma.
Finally, to obtain the last inequality, we add equations \eqref{fivea} and \eqref{fivec} and integrate on $(0,L)$ to get
$$
\frac{d}{dt}\int_{0}^{L}(|u_1|+ |q_{1}|)\ dx +\alpha|u_1(t,L)| \leq \alpha|u_{b}(t)| + K_{1}\int_{0}^{L}|u_{0}|\ dx.
$$
Since we have shown that $\int_{0}^{L}|u_{0}|\ dx \leq \mathcal{H}(t) <\infty$,
we can conclude after integrating with respect to time.

\end{proof}

\subsection{Estimates on the derivatives}

\subsubsection{Data regularization}

Here we detail the notion of regularization for $\BV$ functions. 
The regularization denoted $f_\delta$ for a generic
$BV(0,L)$ function $f$ is described in the proof of Theorem 5.3.3 \cite{ziemer}. 
It provides the estimates from above :
$$
\nrm{\dx f_\delta}{L^1(0,L)} \leq \nrm{f}{BV(0,L)}.
$$
Using the standard mollifier this result is not true as stated p. 225 \cite{ziemer}, 
since $BV$ space is not separable.

\begin{defi}\label{def.reg.data}
If $(u_1^0,u_2^0,q_1^0,q_2^0,u_0^0)$ and $u_b$ are respectively the initial and
boundary data associated to the problem \eqref{five},
under hypotheses \ref{bvcond} and \ref{boundcond}, we define as {\bf regular data}
their regularization in the following manner~: we set
$$
\left\{
\begin{aligned}
\uzdu(x) & := ((1-\chiu{\delta}(x)-\chi_\delta(L-x))u_1^0+ c_1 \chiu{\delta}(x)  + c_2 \chi_{\delta}(L-x))_\delta  ,& \forall x\in[0,L] \\
\ubd (t) & := ((1-\chiu{\delta}(t))u_b+ c_1 \chiu{\delta}(t) )_\delta ,&  \forall t\in[0,T] \\
\uzdd(x) & := ((1-\chiu{\delta}(L-x))u_2^0 +c_2 \chiu{\delta}(L-x))_\delta , &  \forall x\in[0,L]\\
\qzdu(x) & := (q_1^0)_\delta , &  \forall x\in[0,L]\\
\qzdd(x) & := (q_2^0)_\delta , &  \forall x\in[0,L]\\
\uzdz(x) & := (u_0^0)_\delta , &  \forall x\in[0,L]\\
\end{aligned}
\right.
$$
where the regularization procedure is extracted from the proof of Theorem 5.3.3 \cite{ziemer}
and we define
$$
\chiu{\delta}(t) := 
\chi\left( \frac{t}{\delta} \right) ,\quad \chi(t) :=
\begin{cases}
1 & \text{if} \; |t|<1 \\
0 & \text{if} \; |t|>2
\end{cases}
$$
where $\chi\in C^\infty(\RR)$ is a positive monotone function.   
\end{defi}
\noindent On the other hand, we introduced  arbitrary constants $(c_i)_{i\in\{1,2\}}$ such that
\begin{itemize}
\item that match the initial and boundary condition on $x=0$ 
for the incoming characteristic.
\item that prevents mismatches between $u_1^0$, $u_2^0$ and the boundary
condition  $u_2(t,L)=u_1(t,L)$ in the neighbourhood of $x=L$.
\end{itemize}
The matching is $C^\infty$ in the neighbourhood of $(0,0)$ in  $[0,L]\times[0,T]$. Indeed, $\uzdu(x)= c_1$ when $x$ is close enough to $0$ and
in the same way $\ubd(t)=c_1$ when $t$ is near $0$, whereas for the derivatives
$(\uzdu)^{(k)} (x) = 0$ when $x$ is close to zero for any derivative of order $k$, and the same holds for $(\ubd)^{(k)}(t)$
when $t$ is sufficiently small. The same holds true in the neighbourhood of the point  $(t,x)=(L,0)$.

This regularization procedure allows then to obtain
\begin{lem}\label{lem.reg.reg}
Assume hypotheses \ref{ass.nonlin}, and let $U^\delta$ be the solution  associated to problem \eqref{five} with initial  data $U^{0,\delta}=(u_1^{0,\delta},u_2^{0,\delta},q_1^{0,\delta},q_2^{0,\delta},u_0^{0,\delta})$ and the boundary condition $\ubd$. Then $\dt U^\delta$ belongs to $X_T:=L^\infty((0,T);(L^1(0,L)\cap L^\infty(0,L)))^5$.
and solves the problem 
\begin{equation}\label{eq.dt.U}
\left\{
\begin{aligned}
& (\dt + \alpha \dx) u_{1,t}^\delta = \ue \left( q_{1,t}^\delta - u_{1,t}^\delta \right) \\
& (\dt - \alpha \dx) u_{2,t}^\delta = \ue \left( q_{2,t}^\delta - u_{2,t}^\delta \right) \\
& \dt q_{1,t}^\delta = - \ue \left( q_{1,t}^\delta - u_{1,t}^\delta \right)  + K_1(u_{0,t}^\delta-q_{1,t}^\delta)\\
& \dt q_{2,t}^\delta = - \ue \left( q_{2,t}^\delta - u_{2,t}^\delta \right) + K_2(u_{0,t}^\delta-q_{2,t}^\delta) - G'(q_2^\delta) q_{2,t}^\delta\\ 
& \dt u_{0,1}^\delta =  K_1(q_{1,t}^\delta - u_{0,t}^\delta) + K_2(q_{2,t}^\delta - u_{0,t}^\delta- G'(q_2^\delta) q_{2,t}^\delta
\end{aligned}
\right.
\end{equation}
where $u_{i,t}^\delta = \dt u_i^\delta$ and so on.
\begin{equation}\label{eq.bdr.cond.dt.U}
\left\{
\begin{aligned}
u_{1,t}^\delta(t,0) & = \dt u^\delta_b(t) , \\ 
u_{1,t}^\delta(0,x) & = - \alpha \dx u_1^{0,\delta} + \ue \left(q_{1}^{0,\delta}-u_1^{0,\delta}\right) \\
u_{2,t}^\delta(0,x) & =  \alpha \dx u_2^{0,\delta} + \ue \left(q_{2}^{0,\delta}-u_2^{0,\delta}\right) \\
q_{1,t}^\delta(0,x) & =   - \ue \left(q_{1}^{0,\delta}-u_1^{0,\delta}\right)+ K_1(u_{0}^{0,\delta}-q_{1}^{0,\delta})  \\
q_{2,t}^\delta(0,x) & =   - \ue \left(q_{2}^{0,\delta}-u_2^{0,\delta}\right) + K_2(u_{0}^{0,\delta}-q_{2}^{0,\delta}) - G(q_2^{0,\delta})  \\
u_{0,t}^\delta(0,x)
& = K_1(q_{1}^{0,\delta}- u_0^{0,\delta }) + K_2 (q_2^{0,\delta }-u_0^{0,\delta})  + G (q_2^{0,\delta})
\end{aligned}
\right.
\end{equation}

\end{lem}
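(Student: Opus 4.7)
The plan is to regard system \eqref{eq.dt.U} as a linear semilinear hyperbolic system in the unknowns $(u_{i,t}^\delta, q_{i,t}^\delta, u_{0,t}^\delta)$, of exactly the same transport/relaxation structure as \eqref{five}, in which the only modification is that the Michaelis--Menten nonlinearity $G(q_2^\eps)$ is replaced by $G'(q_2^\delta)\,q_{2,t}^\delta$, whose coefficient $G'(q_2^\delta)$ lies in $L^\infty$ by Assumption \ref{ass.nonlin}. Once one verifies that the initial data listed in \eqref{eq.bdr.cond.dt.U} and the boundary datum $\dt \ubd$ belong respectively to $L^1(0,L) \cap L^\infty(0,L)$ and $L^1(0,T) \cap L^\infty(0,T)$, Theorem \ref{th:exist} applies verbatim to this linear system; this is the heart of the argument.

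First, the regularization of Definition \ref{def.reg.data} produces $U^{0,\delta} \in C^\infty([0,L])^5$ with spatial derivatives bounded in $L^1(0,L)$ by the corresponding $\BV$ norms, and bounded in $L^\infty$ for each fixed $\delta > 0$; the analogous statement holds for $\ubd \in C^\infty([0,T])$. The plateau constants $c_1,c_2$ enforce $\uzdu(x) \equiv c_1 \equiv \ubd(t)$ on a neighbourhood of $(t,x)=(0,0)$ and $\uzdu(x) \equiv c_2 \equiv \uzdd(x)$ on a neighbourhood of $x=L$, so every finite-order derivative matches at both corners and no singular trace can appear. Plugging these data into the Duhamel formulas \eqref{u1}--\eqref{eq.ode} shows that $U^\delta$ is $C^1$ along characteristics up to the boundary, so that evaluating the equations of \eqref{five} at $t=0$ is legitimate and yields precisely the expressions of $u_{i,t}^\delta(0,\cdot)$, $q_{i,t}^\delta(0,\cdot)$, $u_{0,t}^\delta(0,\cdot)$ listed in \eqref{eq.bdr.cond.dt.U}; the preceding bounds then place those functions in $L^1\cap L^\infty$. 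Differentiating in $t$ the boundary identity $u_1^\delta(t,0)=\ubd(t)$ and the matching $u_2^\delta(t,L)=u_1^\delta(t,L)$ supplies the required lateral boundary conditions.

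With admissible data for \eqref{eq.dt.U}--\eqref{eq.bdr.cond.dt.U} in hand, the Banach--Picard argument of Section \ref{sec:existence theo}, applied to the map obtained by freezing $(\tilde u_{i,t}^\delta, \tilde q_{i,t}^\delta, \tilde u_{0,t}^\delta)$ in the right-hand side and using $\|G'\|_\infty$ in place of the Lipschitz constant of $G$, produces a unique weak solution $V^\delta \in X_T$. To identify $V^\delta$ with $\dt U^\delta$, the natural device is a difference-quotient argument: setting $V^{\delta,h}(t,x) := h^{-1}(U^\delta(t+h,x) - U^\delta(t,x))$ on $[0,T-h]\times[0,L]$, direct manipulation of \eqref{five} shows that $V^{\delta,h}$ solves \eqref{eq.dt.U} up to the substitution of $G'(q_2^\delta)$ by the mean-value coefficient $\int_0^1 G'\bigl(\theta q_2^\delta(t,\cdot)+(1-\theta)q_2^\delta(t+h,\cdot)\bigr)d\theta$. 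Uniform $X_T$-bounds for $V^{\delta,h}$ follow by reapplying Lemmas \ref{lem:infty} and \ref{prop} to this linear problem, and its initial trace converges to the right-hand side of \eqref{eq.bdr.cond.dt.U} as $h\downarrow 0$. Passing to the limit and invoking uniqueness of the weak solution of \eqref{eq.dt.U}--\eqref{eq.bdr.cond.dt.U} gives $V^{\delta,h}\to V^\delta=\dt U^\delta$ in $X_T$, which is the claim.

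The main technical obstacle is precisely the corner compatibility built into Definition \ref{def.reg.data}. Without the plateau matching that imposes $\uzdu(0)=\ubd(0)=c_1$ and $\uzdu(L)=\uzdd(L)=c_2$ together with the vanishing of all finite-order derivatives near those points, the initial datum $u_{1,t}^\delta(0,x)=-\alpha\dx \uzdu+\ue(\qzdu-\uzdu)$ would propagate a singular jump along the characteristic issuing from $(0,0)$ (and similarly at $(0,L)$), and $\dt U^\delta$ would then fail to belong to $X_T$. The whole point of Definition \ref{def.reg.data} is to absorb this incompatibility into the construction of the regularized data, so that at fixed $\delta>0$ the formal time-differentiation of \eqref{five} is rigorously justified.
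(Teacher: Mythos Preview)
Your proposal is correct and follows the paper's own strategy: recognise that \eqref{eq.dt.U} has the same transport/relaxation structure as \eqref{five} with the bounded coefficient $G'(q_2^\delta)$ in place of $G$, check that the regularised data make the initial and boundary traces in \eqref{eq.bdr.cond.dt.U} admissible, and then reapply the existence machinery of Theorem \ref{th:exist}. The paper's proof is a four-line sketch that simply differentiates the Duhamel representation \eqref{u1}--\eqref{eq.ode} in $t$ and invokes the existence result once more; your difference-quotient identification of $V^\delta$ with $\dt U^\delta$ is a more careful version of that step, and your emphasis on the corner compatibility built into Definition \ref{def.reg.data} makes explicit a point the paper leaves implicit.
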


\begin{proof}
The Duhamel's formula obtained by the fixed point method in the proof of Theorem \ref{th:exist} provides
a solution $U^\delta \in X_T$. Deriving $U^\delta$ with respect to $t$, one can show that $\dt U^\delta$
solves \eqref{eq.dt.U} with  initial and boundary conditions \eqref{eq.bdr.cond.dt.U}.
Applying then the existence result again proves that actually $\dt U^\delta$ belongs to $X_T$.
\end{proof}

\begin{remark}
 {\em A priori } estimates from  previous sections, when applied to the problem \eqref{eq.dt.U} complemented with initial-boundary data \eqref{eq.bdr.cond.dt.U}, do not provide  a control of   $\dt U^\delta$ which is 
uniform with respect to $\e$.
\end{remark}

This remark motivates next paragraphs.

\newcommand{\bW}{{\mathbf W}}

\subsubsection{The initial layer}\label{sec.init.layer}

When $\eps$ goes to zero, the concentrations $u_1, q_1$ and $u_2, q_2$ approach very quickly each other becoming roughly speaking the same. They relax turning out to be equal exponentially fast in time. 
When considering the time derivative of our unknowns
this fast convergence  provides a singular contribution
to the estimates. In order to account for this phenomenon, we introduce initial layer correctors. \\
On the microscopic scale we define for $t \in \RR_+$, the initial layer correctors 
$(\ti{u}_1,\ti{u}_2,\ti{q}_1,\ti{q}_2)$ solving 
\begin{equation}
\begin{cases}
\p_t \tilde{u}_1= \tilde{q}_{1}-\tilde{u}_{1} \\
\p_t \tilde{u}_2= \tilde{q}_{2}-\tilde{u}_{2} \\
\p_t \tilde{q}_1= \tilde{u}_{1}-\tilde{q}_{1} \\
\p_t \tilde{q}_2= \tilde{u}_{2}-\tilde{q}_{2},
\end{cases}
\end{equation}		 				
with initial conditions 
$$
\tilde{u}_{1}(0,x)=q_{1}^{0,\delta}-u_{1}^{0,\delta}, \quad \tilde{u}_{2}(0,x)=q_{2}^{0,\delta}-u_{2}^{0,\delta},\quad \tilde{q}_1(0,x)=0,\quad  \tilde{q}_2(0,x)=0.
$$
Actually, this system may be solved explicitly and we obtain
\begin{equation}\label{eq.tilde}
\tu_{i}(t,x)=\frac{1}{2}(q_{i}^{0,\delta}(x)-u_{i}^{0,\delta}(x))(1+e^{-2t}); 
\quad   \tq_{i}(t,x)=
\frac{1}{2}(q_{i}^{0,\delta}(x)-u_{i}^{0}(x))(1-e^{-2t}), \quad  i=1,2.
\end{equation}
\subsubsection{Uniform $L^1$ bounds of  the time derivatives}\label{sec.bound.time}
We introduce the following quantities on the macroscopic time scale $t\in[0,T]$ :
\begin{equation}\label{fundef}
\begin{cases}
v_1^\delta(t,x)= u_1^\delta(t,x)+\tilde{u}_1(\frac{t}{\eps},x) \\
v_2^\delta(t,x)= u_2^\delta(t,x)+\tilde{u}_2(\frac{t}{\eps},x)  \\
r_1^\delta(t,x)= q_1^\delta(t,x)+\tilde{q}_1(\frac{t}{\eps},x)  \\
r_2^\delta(t,x)= q_2^\delta(t,x)+\tilde{q}_2(\frac{t}{\eps},x) 
\end{cases}
\end{equation}	
Next, we prove  uniform bounds on the time derivatives :
\begin{prop}\label{propdt}
Let $T>0$. If the data is regular in the sense of Definition \ref{def.reg.data}, setting : 
$$
\tilde{\mathcal{H}}_{t}(t)=
\int_{0}^{L}(|\p_t v^\delta_{1}|+|\p_t v^\delta_{2}|+ |\p_t r^\delta_{1}|+|\p_t r^\delta_{2}|+|\p_t u^\delta_{0}|)(t,x)\ dx,
$$
with functions $v_1,v_2,v_0,r_1,r_2$  defined in \eqref{fundef}, one has :
\begin{equation}\label{estimU1t}
\begin{aligned}
\tilde{\mathcal{H}}_{t}(t) 
+  \int_0^t & 
|\p_t v^\delta_2(\tau,0)|\,d\tau
+ \int_0^t |\p_t v^\delta_1(\tau,L)|\,d\tau \\
& \leq C \left(
\nrm{U^{0,\delta}}{\bW^{1,1}(0,L)} + \nrm{u_b^\delta}{W^{1,1}(0,T)}\right),\quad \text{ for a.e. }t\in (0,T),
\end{aligned} 
\end{equation}
where $\bW^{1,1} (0,L)$ denotes the vector-space $W^{1,1}(0,L)^5$. 
\end{prop}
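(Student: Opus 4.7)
The plan is to run the Stampacchia-type $L^1$ argument of Lemma~\ref{prop} on the system obtained by differentiating in time the equations satisfied by the shifted unknowns $v_i^\delta$, $r_j^\delta$ (defined in \eqref{fundef}) and $u_0^\delta$. The correctors $\tu_i,\tq_j$ from \eqref{eq.tilde} are designed precisely so that the $O(1/\eps)$ singular contributions appearing in the initial data \eqref{eq.bdr.cond.dt.U} of $\p_t U^\delta$ are cancelled. Indeed, using \eqref{eq.tilde} one directly computes
\begin{align*}
\p_t v_1^\delta(0,\cdot) &= -\alpha\p_x u_1^{0,\delta}, &
\p_t v_2^\delta(0,\cdot) &= \alpha\p_x u_2^{0,\delta},\\
\p_t r_1^\delta(0,\cdot) &= K_1(u_0^{0,\delta}-q_1^{0,\delta}), &
\p_t r_2^\delta(0,\cdot) &= K_2(u_0^{0,\delta}-q_2^{0,\delta}) - G(q_2^{0,\delta}),
\end{align*}
and $\p_t u_0^\delta(0,\cdot)$ is already $\eps$-uniform. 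Hence $\tilde{\mathcal{H}}_t(0)\leq C\,\nrm{U^{0,\delta}}{\bW^{1,1}(0,L)}$ independently of $\eps$.

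After differentiating in $t$ the equations for $v_i^\delta,r_j^\delta,u_0^\delta$, the resulting system keeps the stiff relaxation terms $\tfrac{1}{\eps}(\p_t r_i^\delta - \p_t v_i^\delta)$, and acquires the extra source contributions $\alpha\p_x\p_t\tu_i(t/\eps,x)$, $K_j\p_t\tq_j(t/\eps,x)$ and $G'(q_2^\delta)(\p_t r_2^\delta - \p_t\tq_2(t/\eps,x))$ coming from the correctors and the nonlinearity. I would then multiply each of the five equations by the sign of its principal unknown (rigorously in the sense of \eqref{eq.weak.abs}) and sum. A sequence of algebraic cancellations takes place: the stiff $1/\eps$ relaxation terms cancel pairwise between the $v_i^\delta$- and $r_i^\delta$-equations, the $K_j$ exchange terms cancel between the $r_j^\delta$- and $u_0^\delta$-equations, and the Lipschitz term $G'(q_2^\delta)|\p_t r_2^\delta|$ cancels between the $r_2^\delta$- and $u_0^\delta$-equations. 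Integrating in $x$ on $(0,L)$ and using the compatibility $u_1^\delta(t,L)=u_2^\delta(t,L)$, which yields $\p_t v_2^\delta(t,L)-\p_t v_1^\delta(t,L) = \p_t(\tu_2-\tu_1)(t/\eps,L)$, one reaches
\begin{align*}
\frac{d}{dt}\tilde{\mathcal{H}}_t(t) + \alpha|\p_t v_2^\delta|(t,0)
\leq \alpha|\p_t v_1^\delta|(t,0) + \alpha\bigl|\p_t(\tu_2-\tu_1)(t/\eps,L)\bigr|
+ \int_0^L \mathcal{R}(t/\eps,x)\,dx,
\end{align*}
where $\mathcal{R} := \alpha\sum_{i=1,2}|\p_x\p_t\tu_i| + C\sum_{j=1,2}|\p_t\tq_j|$ with $C$ depending only on $K_1,K_2,\nrm{G'}{\infty}$.

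All forcing contributions above are uniformly bounded in $\eps$ after time integration. From the explicit formulas \eqref{eq.tilde}, $\p_\tau\tq_j(\tau,x)=(q_j^{0,\delta}-u_j^{0,\delta})(x)e^{-2\tau}$ and $\p_\tau\tu_j(\tau,x)=-(q_j^{0,\delta}-u_j^{0,\delta})(x)e^{-2\tau}$, so the rescaling $\tau=t/\eps$ together with $\int_0^{+\infty}\tfrac{1}{\eps}e^{-2t/\eps}dt=\tfrac12$ absorbs the stiff $1/\eps$ prefactor generated when differentiating $\tu_j(t/\eps,x)$ in $t$; this gives $\int_0^T\int_0^L\mathcal{R}(t/\eps,x)\,dx\,dt \leq C'\nrm{U^{0,\delta}}{\bW^{1,1}(0,L)}$. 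At $x=0$, $\p_t v_1^\delta(t,0) = \dt u_b^\delta(t) + \p_t\tu_1(t/\eps,0)$, so its $L^1(0,T)$ norm is bounded by $\nrm{\dt u_b^\delta}{L^1(0,T)}$ plus a finite initial-layer contribution; the boundary defect at $x=L$ is controlled by the same exponential mechanism. Integrating the inequality in time yields the part of \eqref{estimU1t} involving $\alpha\int_0^t|\p_t v_2^\delta|(\tau,0)d\tau$.

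It remains to produce $\alpha\int_0^t|\p_t v_1^\delta|(\tau,L)d\tau$ on the left. For this I would add only the two sign-inequalities for $\p_t v_1^\delta$ and $\p_t r_1^\delta$; their mutual stiff terms still cancel, and integrating in $x$ now isolates $\alpha|\p_t v_1^\delta|(t,L)$ on the left, while the right-hand side contains $K_1\int_0^L|\p_t u_0^\delta|dx$, already controlled by the first part of the estimate. The main technical obstacle is precisely this triple bookkeeping: one must simultaneously exploit (i) the correctors, which kill the singular $O(1/\eps)$ initial data of the time derivative, (ii) the exact algebraic cancellation of every stiff relaxation and exchange term in the sum of the five absolute-value inequalities, and (iii) the exponential decay $e^{-2t/\eps}$ of the initial layer, which turns the \emph{a priori} stiff $1/\eps$ factors in the residual source terms into uniform $O(1)$ time integrals.
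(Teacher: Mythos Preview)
Your proposal is correct and follows essentially the same approach as the paper's proof: derive the shifted system \eqref{funsist}, differentiate in time, apply the $\sgn$-trick of \eqref{eq.weak.abs}, sum so that the stiff $1/\eps$ and exchange terms cancel, and control the residual corrector contributions (the paper's $F_1$--$F_4$) via the exponential decay in \eqref{eq.tilde}; the separate $v_1^\delta+r_1^\delta$ pairing to extract the $x=L$ boundary term is also exactly what the paper does. The only cosmetic difference is that the paper writes out the intermediate system \eqref{systemUQdt} and labels the four forcing groups explicitly, whereas you collapse them into a single $\mathcal{R}$.
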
						
\begin{proof}
	From system \eqref{five} we deduce
	\begin{equation}\label{funsist}
	\begin{cases}	
	\p_t v_1^\delta + \p_x v_1^\delta=\frac{1}{\eps} (r_1^\delta-v_1^\delta) + \p_x \tilde{u}_1(\frac{t}{\eps},x) \\
	\p_t v_2^\delta - \p_x v_2^\delta=\frac{1}{\eps} (r_2-v_2^\delta) - \p_x \tilde{u}_2(\frac{t}{\eps},x) \\
	\p_t r_1^\delta =\frac{1}{\eps} (v_1^\delta-r_1^\delta)+K_1(u_0^\delta-r_1^\delta) +K_1 \tilde{q}_1(\frac{t}{\eps},x) \\
	\p_t r_2 =\frac{1}{\eps} (v_2^\delta-r_2)+K_2(u_0^\delta-r_2) +K_2 \tilde{q}_2(\frac{t}{\eps},x)- G(q_2^\delta) \\
	\p_t u_0^\delta =K_1(r_1^\delta-u_0^\delta) +K_2 (r_2-u_0^\delta)- K_1 \tilde{q}_1(\frac{t}{\eps},x)-K_2\tilde{q}_2(\frac{t}{\eps},x) + G(q_2^\delta)
	\end{cases}				
	\end{equation}	
	with following initial and boundary conditions:					
	\begin{equation}\label{incond2}
	\begin{array}{ll}
	v_1^\delta(t,0)=u_1(t,0)+\tilde{u}_1(t,0)=u_b(t)+\tilde{u}_1(\frac{t}{\eps},0), & t \in (0,T), \\
	v_2^\delta(t,L) = v_2^\delta(t,L) + \tilde{u}_2(\frac{t}{\eps},L), & t \in (0,T),  \\
	v_1^\delta(0,x)=u_1(0,x)+\tilde{u}_1(0,x)=q_{1}^{0}(x), & x \in (0,L), \\
	v_2^\delta(0,x)=u_2(0,x)+\tilde{u}_2(0,x)=q_{2}^{0}(x), & \\
	r_1^\delta(0,x)=q_1(0,x)+\tilde{q}_1(0,x)=q_{1}^{0}(x), & \\
	r_2(0,x)=q_2^\delta(0,x)+\tilde{q}_2(0,x)=q_{2}^{0}(x). & \\
	\end{array}
	\end{equation}
	As $G \in C^2(\RR)$, thanks to Lemma \ref{lem.reg.reg}, $\dt U^\delta \in L^\infty((0,T);(L^1(0,T)\cap L^\infty(0,L)))^5$, 
	taking the derivative with respect to $t$ in system \eqref{funsist}, $\dt V^\delta =\dt ( v_1^\delta, v_2^\delta,r_1^\delta,r_2^\delta,u_0^\delta)$ solves
	\begin{equation*}\label{funsistdt}
	\begin{cases}
	\p_t v_{1,t}^\delta + \p_x v_{1,t}^\delta=\frac{1}{\eps} (r_{1,t}^\delta-v_{1,t}^\delta)+\frac{1}{\eps}\p_x \tilde{u}_{1,t} \\
	\p_t v_{2,t}^\delta - \p_x v_{2,t}^\delta=\frac{1}{\eps} (r_{2,t}^\delta-v_{2,t}^\delta)-\frac{1}{\eps}\p_x \tilde{u}_{2,t} \\
	\p_t r_{1,t}^\delta =\frac{1}{\eps} (v_{1,t}^\delta-r_{1,t}^\delta)+K_1(u_{0,t}^\delta-r_{1,t}^\delta) +\frac{1}{\eps}K_1 \tilde{q}_{1,t} \\
	\p_t r_{2,t}^\delta =\frac{1}{\eps} (v_{2,t}^\delta-r_{2,t}^\delta)+K_2(u_{0,t}^\delta-r_{2,t}^\delta) + \frac{1}{\eps} K_2 \tilde{q}_{2,t}- G'(q_2^\delta)q_{2,t} \\
	\p_t u_{0,t}^\delta =K_1(r_{1,t}^\delta-u_{0,t}^\delta) +K_2 (r_{2,t}^\delta-u_{0,t}^\delta)- 
	\frac{1}{\eps}K_1 \tilde{q}_{1,t}-\frac{1}{\eps}K_2\tilde{q}_{2,t}+ G'(q_2^\delta)q_{2,t}
	\end{cases}				
	\end{equation*}	
	in the sense of Definition \eqref{def.weak.solution}.			
	Again formally, we   multiply each equation respectively by $\sgn(v^\delta_{i,t})$ with $i=1,2$, and  $\sgn(r^\delta_{j,t})$, for $j=1,2$, and by $\sgn(u_{0,t}^\delta)$ in the sense explained in the proof of Theorem \ref{th:exist}. This gives 
	\begin{equation}\label{systemUQdt}
	\begin{cases}	
	\p_t |v_{1,t}^\delta| + \p_x |v_{1,t}^\delta| \leq \frac{1}{\eps} (|r_{1,t}^\delta|-|v_{1,t}^\delta|)+|\frac{1}{\eps}\p_x \tilde{u}_{1,t}| \\
	\p_t |v_{2,t}^\delta| - |\p_x v_{2,t}^\delta| \leq \frac{1}{\eps} (|r_{2,t}^\delta|-|v_{2,t}^\delta|)+|\frac{1}{\eps}\p_x \tilde{u}_{2,t}| \\
	\p_t |r_{1,t}^\delta|  \leq \frac{1}{\eps} (|v_{1,t}^\delta|-|r_{1,t}^\delta|)+K_1(|u_{0,t}|-|r_{1,t}^\delta|) +|\frac{1}{\eps}K_1 \tilde{q}_{1,t}| \\
	\p_t |r_{2,t}^\delta| \leq \frac{1}{\eps} (|v_{2,t}^\delta|-|r_{2,t}^\delta|)+K_2(|u_{0,t}|-|r_{2,t}^\delta|) + |\frac{1}{\eps} K_2 \tilde{q}_{2,t}| \\
	\qquad \qquad \quad +  |G'(q_2^\delta)\frac{1}{\eps}\tilde{q}_{2,t}| - G'(q_2^\delta)|r_{2,t}^\delta| \\
	\p_t |u_{0,t}| \leq K_1(|r_{1,t}^\delta|-|u_{0,t}|)+K_2 (|r_{2,t}^\delta|-|u_{0,t}|)+ 
	|\frac{1}{\eps}K_1 \tilde{q}_{1,t}|   \\
	\qquad \qquad \quad +|\frac{1}{\eps}K_2\tilde{q}_{2,t}| + |G'(q_2^\delta)\frac{1}{\eps}\tilde{q}_{2,t}| + G'(q_2^\delta) |r_{2,t}^\delta|.
	\end{cases}				
	\end{equation}
	Indeed, the right hand side of the $4th$ and $5th$ inequalities can be obtained as follows.
	On the one hand, we have
	\begin{align*}
	-G'(q_2^\delta)q_{2,t} \sgn(r_{2,t}^\delta) &
	=-G'(q_2^\delta)\left(r_{2,t}^\delta(t,x)-\frac{1}{\eps}\tilde{q}_{2,t}\left(\frac{t}{\eps},x\right)\right) \sgn(r_{2,t}^\delta)  \\
	&\leq -G'(q_2^\delta)\left|r_{2,t}^\delta\right|+ \frac{1}{\eps}\left|G'(q_2^\delta)\tilde{q}_{2,t}\right|.
	\end{align*}
	On the other hand
	\begin{align*}
	-G'(q_2^\delta)q_{2,t} \text{sgn}(u_{0,t}^\delta)& =-G'(q_2^\delta)\left(r_{2,t}^\delta(t,x)-\frac{1}{\eps}\tilde{q}_{2,t}\left(\frac{t}{\eps},x\right)\right) \sgn(u_{0,t})  \\
	& \leq G'(q_2^\delta)\left|r_{2,t}^\delta\right|+\frac{1}{\eps}\left|G'(q_2^\delta)\tilde{q}_{2,t}\right|,
	\end{align*}
	since $G$ is non-decreasing by assumption \eqref{nonlin}. 
	Summing all inequalities in \eqref{systemUQdt} and integrating with repsect to space on $(0,L)$, we obtain
	formally 
	\begin{equation}\label{dt}
	\frac{d}{dt} \tilde{\mathcal{H}}_{t}(t) + |v_{2,t}^\delta(t,0)| \leq F_1(t)+F_2(t)+F_3(t)+F_4(t),
	\end{equation}
	where 
	$$
	\begin{aligned}
	F_{1}(t)& :=|v_{2,t}^\delta(t,L)|-| v_{1,t}^\delta(t,L)|; \quad F_{2}(t):=| v_{1,t}^\delta(t,0)|, \\
	F_3(t)& :=\frac{1}{\eps}\int_{0}^{L}\left|\p_x \tilde{u}_{2,t}\left(\frac{t}{\eps},x\right) \right|  dx +\frac{1}{\eps}\int_{0}^{L}\left|\p_x \tilde{u}_{1,t}\left(\frac{t}{\eps},x\right) \right| dx,  \\
	F_{4}(t)&:=\frac{2K_1 }{\eps}\int_{0}^{L} \left|\tilde{q}_{1,t}\left(\frac{t}{\eps},x\right) \right|\,dx + \frac{2}{\eps}(\|G'\|_\infty+K_2)\int_{0}^{L}\left|\tilde{q}_{2,t}\left(\frac{t}{\eps},x\right) \right|\,dx.
	\end{aligned}
	$$
	
	Integrating \eqref{dt} in time, we get
	\begin{equation}\label{Htilde}
	\tilde{\mathcal{H}}_{t}(t) + \int_0^T|v_{2,t}^\delta(t,0)|\,dt \leq \int_0^T(F_1(t)+F_2(t)+F_3(t)+F_4(t))\,dt + \tilde{\mathcal{H}}_{t}(0).
	\end{equation}
	Let us consider each term of the right hand side of \eqref{Htilde} separately:
	\begin{itemize}
		\item $F_1$: On the right boundary $x=L$, one has
		\begin{align*}
		\int_{0}^{T}  (|& v_{2,t}^\delta(t,L)|- |v_{1,t}^\delta(t,L)|)\,dt
		\leq \ue \int_{0}^{T}\left| (\tilde{u}_{2,t}-\tilde{u}_{1,t})\left(\frac{t}{\eps},L\right)\right|\,dt  \\
		& \leq \int_{0}^{\frac{T}{\eps}}| (\tilde{u}_{2,t}-\tilde{u}_{1,t})(\tau,L)|\,d\tau  \leq \frac 12 \left\{  \left| u_1^{0,\delta}(0)-q_1^{0,\delta}(0)\right| + \left| u_2^{0,\delta}(0)-q_2^{\delta,0}(0)\right|\right\}\\
		&     \leq C \nrm{U^{0,\delta}}{\bW^{1,1}(0,L)}
		\end{align*}
		where we used trace operator's continuity 
		for $W^{1,1}(0,L)$ functions.
		\item $F_2$: On the other hand at $x=0$, the boundary condition can be estimated as
		\begin{align*}
		\int_{0}^{T}& |v_{1,t}^\delta(t,0)|\ dt
		\leq \int_{0}^{T} |(u_{b}^{\delta})'(s)|\,ds+ \int_{0}^{\frac{T}{\eps}} |(u_1^{0,\delta}(0)-q_1^{0,\delta}(0))e^{-2\tau}|\ d\tau \\
		& \leq C \left( \nrm{u_b^\delta}{W^{1,1}(0,T)} + \nrm{U^{0,\delta}}{\bW^{1,1}(0,L)}\right)
		%
		\end{align*}
		as above.
		\item $F_3$: With the change of variable $\tau=\frac{t}{\eps}$, we have, using again \eqref{eq.tilde},
		\begin{align*}
		\int_{0}^{T}\!\!\int_{0}^{L} \left|\frac{1}{\eps}\p_x  \tilde{u}_{i,t}\left(\frac{t}{\eps},x\right)\right|\,dxdt
		= \int_{0}^{\frac{T}{\eps}}\!\!\int_{0}^{L} |\p_x  \tilde{u}_{i,t}(\tau,x)|\,dxd\tau
		\leq C \nrm{U^{0,\delta}}{\bW^{1,1}(0,L)},
		\end{align*}
		which is uniformly bounded with respect to $\eps$.
		\item $F_4$: similarly, we have 
		\begin{align*}
		\int_0^T\int_0^L \left|\frac{1}{\eps} \tilde{q}_{i,t}\left(\frac{t}{\eps},x\right)\right|\,dxdt
		=   \int_{0}^{\frac{T}{\eps}}\int_{0}^{L} | \tilde{q}_{i,t}(\tau,x)|\ dxd\tau  
		\leq C \nrm{U^{0,\delta}}{\bW^{1,1}(0,L)}
		\end{align*}
		thanks to the fact that $\p_t \tilde{q}_{i}(\tau,x)=(q_i^{0}(x)-u_i^{0}(x))e^{-2\tau}$ with $i=1,2.$
	\end{itemize}
	It remains  to estimate  $\tilde{\mathcal{H}}_{t}(0)$ in \eqref{Htilde}. Indeed, 
	using \eqref{funsist} at $t=0$ in order to convert time derivatives into 
	expressions involving only the data and its space derivatives, one obtains  
	$$
	\tilde{\mathcal{H}}_{t}(0) = 
	\int_0^L \left(\sum_{i=1}^2  |v_{i,t}^\delta(0,x)|+ 
	|r_{i,t}^\delta(0,x)| + |u_{0,t}(0,x)| \right)\,dx \leq C \nrm{U^{0,\delta}}{\bW^{1,1}(0,L)}
	$$
	So for instance,
	for the first term of the sum, we use the first equation in \eqref{funsist} and we write
	$$
	\p_t v_1^\delta(0,x)=\frac{1}{\eps} (r_1^\delta(0,x)-v_1^\delta(0,x))+\p_x\tilde{u}_1(0,x)-\p_x v_1^\delta(0,x).
	$$
	Recalling that $r_1^\delta(0,x)=q_1^{0}(x)$ and $v_1^\delta(0,x)=q_1^{0}(x) $ as defined in \eqref{incond2}
	we get: $\p_t v_1^\delta(0,x)=\p_x (q_1^{0}(x)-u_1^{0}(x))-\p_x v_1^\delta(0,x)= -\p_x u_1^{0}(x),$ then
	$$
	\int_{0}^{L}|\p_t v_1^\delta(0,x)|\ dx \leq \int_{0}^{L}|\p_x u_1^{0,\delta}(x)|\ dx < C \nrm{U^{0,\delta}}{\bW^{1,1}(0,L)},
	$$
	The rest follows exactly the same way.
	We conclude from \eqref{Htilde} and the above calculations that
	$$
	\begin{aligned}
	\tilde{\mathcal{H}}_{t}(t)  + \int_0^T |v_{2,t}^\delta(t,0)|\,dt & \leq  \tilde{\mathcal{H}}_{t}(0)+\int_{0}^{t} (F_1+F_2+F_3+F_4)(s)\ ds \\
	& \leq  C\left(\nrm{U^{0,\delta}}{\bW^{1,1}(0,L)}+\nrm{u^\delta_b}{W^{1,1}(0,T)}\right).
	\end{aligned}
	$$
	Finally, in order to recover \eqref{estimU1t}, we add the first and third inequalities in \eqref{systemUQdt} 
	and integrate on $(0,T)\times(0,L)$, we get
	\begin{align*}
	&\int_0^L (|v_{1,t}^\delta(T,x)|+|r_{1,t}^\delta(T,x)|)\,dx + \int_0^T |v_{1,t}^\delta(t,L)|\,dt  \\
	&  \leq\ \int_0^T |v_{1,t}^\delta(t,0)|\,dt  + \int_0^L (|v_{1,t}^\delta(0,x)|+|r_{1,t}^\delta(0,x)|)\,dx  \\
	&\quad + \int_0^T\!\!\int_0^L \left(K_1|u^\delta_{0,t}(t,x)|
	+ \frac{K_1}{\eps}\left|\tilde{q}_{1,t}\left(\frac{t}{\e},x\right)\right| 
	+ \frac{1}{\eps}\left|\p_x\tilde{u}_{1,t}\left(\frac{t}{\e},x\right)\right| \right)\,dxdt.
	\end{align*}
	We have already proved that the second term of the right hand side is bounded.
	We have also proved above that $u^\delta_{0,t}$ is uniformly bounded in $L^\infty((0,T);L^1(0,L))$.
	From \eqref{incond2}, we have $v_{1,t}^\delta(t,0) = u_b'(t) + \frac{1}{\eps} \tilde{u}_{1,t}(\frac{t}{\eps},0)$. As above, we use the expressions of $\tilde{u}_1$ and $\tilde{q}_1$ and a change of variable to bound each term of the right hand side. 
\end{proof}

As a consequence, we deduce the following estimates on the time derivatives of the original unknowns
$(u_1^\delta,u_2^\delta,q_1^\delta,q_2^\delta,u_0^\delta)$~:
\begin{corollary}\label{lemmadt}
Let $T>0$, under the same assumptions,
there exists a constant $C_T>0$ depending only on the $W^{1,1}$ norm of the data but independent on $\e$,
such that~:
\begin{equation} \label{eq.estim.dt}
\begin{aligned}
& \int_0^T \int_{0}^{L} (|\p_{t}u_{1}^\delta|+|\p_{t}u_{2}^\delta|+|\p_{t}u_{0}^\delta|+|\p_{t}q_{1}^\delta|+|\p_{t}q_{2}^\delta|)(t,x)\,dx \,dt \leq C \nrm{U^{0,\delta}}{\bW^{1,1}(0,L)},  \\
& \int_{0}^{T} |\p_{t}u_{2}^\delta(t,0) |\ dt \leq C\nrm{U^{0,\delta}}{\bW^{1,1}(0,L)},  
\int_{0}^{T} |\p_{t} u_{1}^\delta(t,L)|\ dt \leq  C\nrm{U^{0,\delta}}{\bW^{1,1}(0,L)}. 
\end{aligned}  
\end{equation}
\end{corollary}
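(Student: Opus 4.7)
The plan is to deduce the corollary directly from Proposition \ref{propdt} by inverting the change of unknowns introduced in \eqref{fundef}. Writing $u_i^\delta = v_i^\delta - \tilde u_i(t/\eps,\cdot)$ and $q_i^\delta = r_i^\delta - \tilde q_i(t/\eps,\cdot)$ for $i=1,2$, the chain rule yields
\begin{equation*}
\p_t u_i^\delta(t,x) = \p_t v_i^\delta(t,x) - \frac{1}{\eps}\,(\p_\tau \tilde u_i)\Bigl(\frac{t}{\eps},x\Bigr),
\qquad
\p_t q_i^\delta(t,x) = \p_t r_i^\delta(t,x) - \frac{1}{\eps}\,(\p_\tau \tilde q_i)\Bigl(\frac{t}{\eps},x\Bigr),
\end{equation*}
while $\p_t u_0^\delta$ already appears in $\tilde{\mathcal H}_t$. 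So the whole task reduces to showing that the correctors contribute a term controlled by $\nrm{U^{0,\delta}}{\bBV W^{1,1}(0,L)}$ independently of $\eps$, and then invoking \eqref{estimU1t}.

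First, I would use the explicit formulas \eqref{eq.tilde} to compute $\p_\tau \tilde u_i(\tau,x)=-(q_i^{0,\delta}(x)-u_i^{0,\delta}(x))e^{-2\tau}$ and similarly $\p_\tau \tilde q_i(\tau,x)=(q_i^{0,\delta}(x)-u_i^{0,\delta}(x))e^{-2\tau}$. Performing the change of variable $\tau=t/\eps$ inside the space-time $L^1$ norm, one obtains uniformly in $\eps$
\begin{equation*}
\int_0^T\!\!\int_0^L \Bigl|\frac{1}{\eps}(\p_\tau \tilde u_i)\Bigl(\frac{t}{\eps},x\Bigr)\Bigr|\,dx\,dt
= \int_0^{T/\eps}\!\!\int_0^L |q_i^{0,\delta}-u_i^{0,\delta}|\,e^{-2\tau}\,dx\,d\tau
\leq \tfrac{1}{2}\nrm{q_i^{0,\delta}-u_i^{0,\delta}}{L^1(0,L)},
\end{equation*}
and the analogous inequality for $\tilde q_i$. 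By the continuous embedding $W^{1,1}(0,L)\hookrightarrow L^1(0,L)$ these are controlled by $C\nrm{U^{0,\delta}}{\bW^{1,1}(0,L)}$. Combining with \eqref{estimU1t} and the triangle inequality gives the first estimate in \eqref{eq.estim.dt}.

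For the two trace estimates, the same identity applied pointwise on the lateral boundaries gives
\begin{equation*}
\int_0^T |\p_t u_2^\delta(t,0)|\,dt \leq \int_0^T |\p_t v_2^\delta(t,0)|\,dt + \int_0^{T/\eps} |(\p_\tau \tilde u_2)(\tau,0)|\,d\tau,
\end{equation*}
and similarly at $x=L$ for $u_1^\delta$. The first term is bounded via Proposition \ref{propdt}; the second is estimated, using \eqref{eq.tilde} evaluated at the endpoint, by $|q_2^{0,\delta}(0)-u_2^{0,\delta}(0)|$ (resp.\ at $L$), which the trace theorem for $W^{1,1}(0,L)$ controls by $C\nrm{U^{0,\delta}}{\bW^{1,1}(0,L)}$. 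No genuine obstacle is expected, the only subtlety being that the trace bound at the endpoints is what converts a pointwise value of the regularised data into a $W^{1,1}$ norm uniformly in $\delta$, and that the change of variable $\tau=t/\eps$ is what absorbs the $1/\eps$ factor at the cost of enlarging the time interval from $(0,T)$ to $(0,T/\eps)$, which is harmless because the correctors decay exponentially. Assembling the three ingredients completes the proof.
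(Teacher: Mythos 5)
Your proposal is correct and follows essentially the same route as the paper: both invert the decomposition \eqref{fundef}, apply the triangle inequality, absorb the $1/\eps$ factor on the correctors via the change of variable $\tau=t/\eps$ together with their exponential decay, and use the $W^{1,1}$ trace theorem at the endpoints before invoking Proposition \ref{propdt}. No gaps.
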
		
\begin{proof}
	We recall the expressions
	$$v^\delta_1=u^\delta_1+\tilde{u}_1, \quad  v^\delta_2=u^\delta_2+\tilde{u}_2,  \quad r^\delta_1=q^\delta_1+\tilde{q}_1, \quad r^\delta_2=q^\delta_2+\tilde{q}_2. $$
	By the triangle inequality, we have for $i\in \{1,2\}$,
	\begin{align*}
	\|\p_t u^\delta_i\|_{L^1([0,T]\times[0,L]) }\leq \|\p_t v^\delta_i\|_{L^1([0,T]\times[0,L]) }+\frac 1\eps \|\p_t \tilde{u}_i(t/\eps,x) \|_{L^1([0,T]\times[0,L]) },  \\
	\|\p_t q^\delta_i\|_{L^1([0,T]\times[0,L]) }\leq \|\p_t r^\delta_i\|_{L^1([0,T]\times[0,L]) }+\frac 1\eps \|\p_t \tilde{q}_i(t/\eps,x) \|_{L^1([0,T]\times[0,L]) }.
	\end{align*}
	The first terms of the latter right hand side are bounded from Proposition \ref{propdt}. For the second terms, we have, as above,
	\begin{align*}
	\int_{0}^{T}\int_{0}^{L} \frac 1\eps \left|\p_t \tilde{q}_{i}\left(\frac{t}{\eps},x\right)\right|\ dxdt = \frac{1}{\eps} 
	\int_{0}^{T}\int_{0}^{L} \left|(q_i^{0}(x)-u_i^{0}(x))e^{-2\frac{t}{\eps}}\right|\ dxdt <  C \nrm{U^{0,\delta}}{\bW^{1,1}(0,L)},  \\
	\int_{0}^{T}\int_{0}^{L} \frac{1}{\eps}\left|\p_t \tilde{u}_{i}\left(\frac{t}{\eps},x\right)\right|\ dxdt = \frac{1}{\eps} 
	\int_{0}^{T}\int_{0}^{L}\left|(u_i^{0}(x)-q_i^{0}(x))e^{-2\frac{t}{\eps}}\right|\ dxdt < C' \nrm{U^{0,\delta}}{\bW^{1,1}(0,L)}.
	\end{align*}
	
	Furthermore, from \eqref{Htilde}, we get
	$$
	\int_{0}^{T} \left|v^\delta_{2,t}(t,0)\right|\ dt \leq C_T \nrm{U^{0,\delta}}{\bW^{1,1}(0,L)}.
	$$
	By the triangle inequality, it implies the second estimate in \eqref{eq.estim.dt}
	
	To recover the third claim in \eqref{eq.estim.dt},
	we notice that by definition of $v_1^\delta$ and a triangle inequality, we have
	$$
	|u^\delta_{1,t}(t,L)|\leq |v^\delta_{1,t}(t,L)| 
	+ \frac 1\eps \left|\tilde{u}_{1,t}\left(\frac{t}{\eps},L\right)\right| \leq |v^\delta_{1,t}(t,L)| + \frac 1\eps \|q^{0,\delta}_ 1- u^{0,\delta}_1\|_{W^{1,1}(0,L)} e^{-2\frac{t}{\eps}}.
	$$
	where again we use the continuity of the trace operator on $W^{1,1}(0,L)$ functions in order to recover the 
	dependence between the values at $x=L$ and the $W^{1,1}(0,L)$-norm of the initial data. 
	Integrating in time and using \eqref{estimU1t} allows  to conclude.
\end{proof}

\subsubsection{Uniform bounds on the space derivatives}\label{sec.bound.space}

\begin{lem}\label{lem:estimdx}
	Let $T>0$.
	If the data is regular in the sense of Definition \ref{def.reg.data},
	then, the space derivatives of functions $u^\delta_1$, $u^\delta_2$ satisfy the following estimates :
	$$
	\int_0^T\int_{0}^{L} \sum_{i=0}^2 |\p_{x}u^\delta_{i}(t,x)|+\sum_{i=1}^2 |\p_{x}q^\delta_{i}(t,x)| \,dxdt \leq C_T \nrm{U^{0,\delta}}{W^{1,1}((0,L))},
	$$
	for some non-negative constant $C_T$ uniformly bounded with respect to $\eps$.
\end{lem}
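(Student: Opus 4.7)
My approach combines an algebraic identity trick for the transport unknowns $u_1^\delta, u_2^\delta$ with Duhamel estimates for the ODE-type unknowns $q_1^\delta, q_2^\delta, u_0^\delta$, and closes everything via a coupled inequality system in which the problematic $1/\eps$ factors are exactly compensated by exponential decay.

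\emph{Step 1: space derivatives of $u_1^\delta$ and $u_2^\delta$.} I first add \eqref{fivea} to \eqref{fivec} to cancel the stiff $1/\eps$ source, which yields
\begin{equation*}
\alpha\p_x u_1^\delta = -\p_t(u_1^\delta+q_1^\delta) + K_1(u_0^\delta-q_1^\delta),
\end{equation*}
and analogously, adding \eqref{fiveb} to \eqref{fived},
\begin{equation*}
\alpha\p_x u_2^\delta = \p_t(u_2^\delta+q_2^\delta) - K_2(u_0^\delta-q_2^\delta) + G(q_2^\delta).
\end{equation*}
Integrating the absolute values over $(0,T)\times(0,L)$, the right-hand sides are controlled uniformly in $\eps$ by the time-derivative bound of Corollary \ref{lemmadt}, the $L^\infty_tL^1_x$ bound of Lemma \ref{prop}, and $\|G\|_\infty<\infty$. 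This gives uniform control of $\|\p_x u_1^\delta\|_{L^1((0,T)\times(0,L))}$ and $\|\p_x u_2^\delta\|_{L^1((0,T)\times(0,L))}$.

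\emph{Step 2: space derivatives of $q_1^\delta, q_2^\delta, u_0^\delta$ via Duhamel.} For $i\in\{1,2\}$, differentiating Duhamel's formula \eqref{eq.ode} with respect to $x$, taking the $L^1$ norm in $x$ and then in $t$, exchanging the order of integration via Fubini, and using the key identity
\begin{equation*}
\int_0^T\frac{1-e^{-(1/\eps+K_i)(T-s)}}{1/\eps+K_i}\,ds \leq \frac{T}{1/\eps+K_i},
\end{equation*}
I will obtain an estimate of the form
\begin{equation*}
\|\p_x q_i^\delta\|_{L^1} \leq \frac{\eps}{1+\eps K_i}\|\p_x q_i^{0,\delta}\|_{L^1}
+ \frac{1}{1+\eps K_i}\|\p_x u_i^\delta\|_{L^1}
+ \frac{\eps K_i}{1+\eps K_i}\|\p_x u_0^\delta\|_{L^1}.
\end{equation*}
The nonlinear term $G(q_2^\delta)$ in \eqref{fived} only produces an extra multiplicative factor $\exp(-\int_0^t G'(q_2^\delta)\,ds)\in(0,1]$ via the integrating factor (since $G'\geq 0$ by \eqref{nonlin}), which does not degrade the bound. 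A similar Duhamel argument on \eqref{fivee}, together with $\|G'\|_\infty<\infty$, yields
\begin{equation*}
\|\p_x u_0^\delta\|_{L^1} \leq C\Big(\|\p_x u_0^{0,\delta}\|_{L^1} + \|\p_x q_1^\delta\|_{L^1} + \|\p_x q_2^\delta\|_{L^1}\Big),
\end{equation*}
with $C$ depending only on $K_1,K_2,\|G'\|_\infty,T$.

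\emph{Step 3: closing the coupled system.} Substituting the bounds for $\|\p_x q_i^\delta\|_{L^1}$ into the bound for $\|\p_x u_0^\delta\|_{L^1}$ gives an inequality of the form
\begin{equation*}
\|\p_x u_0^\delta\|_{L^1} \leq A + C\Big(\tfrac{\eps K_1}{1+\eps K_1}+\tfrac{\eps K_2}{1+\eps K_2}\Big)\|\p_x u_0^\delta\|_{L^1},
\end{equation*}
where $A$ depends only on the regularized data through $\|U^{0,\delta}\|_{\bW^{1,1}(0,L)}$ (finite by construction of the data regularization, Definition \ref{def.reg.data}). Since the self-coupling coefficient tends to $0$ as $\eps\to 0$, for $\eps$ small enough it is strictly less than $1$, which allows one to absorb the right-hand side and conclude that $\|\p_x u_0^\delta\|_{L^1}$ is uniformly bounded; the bounds on $\|\p_x q_i^\delta\|_{L^1}$ then follow.

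\emph{Main obstacle.} The delicate point is to track the exponents and the $1/\eps$ prefactors in the Duhamel formulas for $q_i^\delta$: a naive estimate would produce an $O(1/\eps)$ loss, but integrating the kernel $e^{-t/\eps}/\eps$ recovers exactly a bounded constant, and the residual cross-coupling to $\p_x u_0^\delta$ has the vanishing coefficient $\eps K_i/(1+\eps K_i)$. Using the algebraic PDE identities in Step~1 (rather than an $L^1$ estimate on the differentiated hyperbolic system) also conveniently avoids boundary-trace terms at $x=0$ and $x=L$, which would otherwise require further initial/boundary layer analysis of $\p_x u_i^\delta$.
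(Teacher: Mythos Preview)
Your proof is correct and follows the same overall route as the paper: Step~1 is identical to the paper's argument (add \eqref{fivea}$+$\eqref{fivec} and \eqref{fiveb}$+$\eqref{fived} to eliminate the stiff term and invoke Corollary~\ref{lemmadt} and Lemma~\ref{prop}), and your Steps~2--3 flesh out what the paper dispatches in one sentence (``deriving the ODE part of \eqref{five} with respect to the space variable and using the latter estimates provides the results for $u_0^\delta$, $q_1^\delta$ and $q_2^\delta$''). Your Duhamel/Fubini computation, with the crucial observation that the coupling coefficient from $\partial_x u_0^\delta$ back to itself is $O(\eps)$ and can therefore be absorbed, is exactly the content hidden behind that sentence; the restriction to $\eps$ small is harmless since for $\eps$ bounded away from $0$ the system carries no stiffness. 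One cosmetic point: in the $q_2^\delta$ equation the integrating factor is $\exp\big(-\int_s^t(\tfrac{1}{\eps}+K_2+G'(q_2^\delta))\,d\tau\big)$ rather than $\exp(-\int G')$ alone, but since $G'\ge 0$ this only strengthens the decay and your conclusion stands.
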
				

\begin{proof}
	Adding equation \eqref{fivea} with \eqref{fivec} and also \eqref{fiveb} with \eqref{fived} we get
	\begin{align*}
	\alpha \p_x u^\delta_1 & = K_1(u^\delta_0-q^\delta_1) - \p_t u^\delta_1 -\p_t q^\delta_1,  \\
	-\alpha \p_x u^\delta_2 & = K_2(u^\delta_0-q^\delta_2) - \p_t u^\delta_2  -\p_t q^\delta_2 - G(q^\delta_2).
	\end{align*}
	Using Corollary \ref{lemmadt} and \eqref{nonlin}, the right hand sides are uniformly bounded in $L^1((0,T)\times(0,L))$.
	Deriving the ODE part of \eqref{five} with respect to the space variable and using the latter estimates
	provides the results for $u_0^\delta$, $q_1^\delta$ and $q_2^\delta$.
\end{proof}

\subsection{Extension to $\BV$ data}
We show here how to use Corollary \ref{lemmadt} and Lemma \ref{lem:estimdx} in order to obtain $\BV$ compactness.
\begin{theo}
	Under hypotheses \eqref{ass.bvcond}-\eqref{ass.nonlin}, 
	there exists a uniform bound such that the $\eps$-dependent solutions of system \eqref{five}  satisfy
	$$
	\sum_{i=0}^2 \nrm{u_i}{BV((0,T)\times(0,L))} + \sum_{i=1}^2 \nrm{q_i}{BV((0,T)\times(0,L))} 
	\leq C \left(\nrm{U^0}{\bBV(0,L)} +    \nrm{u_b}{BV(0,T)}   \right)
	$$
	where the generic constant $C$ is independent on $\e$.
\end{theo}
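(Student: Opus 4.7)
The plan is to approximate the $\BV$ initial and boundary data by the regularized data $(U^{0,\delta}, u_b^\delta)$ of Definition \ref{def.reg.data}, apply the previously established $\eps$-uniform estimates on the time and space derivatives to the corresponding solutions $U^\delta$, and then pass to the limit $\delta\to 0$ via the $L^1$-stability of the problem \eqref{estim1} combined with the lower semi-continuity of the $\BV$ semi-norm under $L^1$-convergence.

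First, fix $\eps>0$ and $\delta>0$, and let $U^\delta=(u_1^\delta,u_2^\delta,q_1^\delta,q_2^\delta,u_0^\delta)$ be the unique weak solution of \eqref{five} associated with the regular data $(U^{0,\delta},u_b^\delta)$ provided by Theorem \ref{th:exist}. Combining the $L^1$-bound on the time derivatives from Corollary \ref{lemmadt} with the $L^1$-bound on the space derivatives from Lemma \ref{lem:estimdx}, we obtain
$$
\sum_{i=0}^{2}\nrm{u_i^\delta}{BV((0,T)\times(0,L))} + \sum_{i=1}^{2}\nrm{q_i^\delta}{BV((0,T)\times(0,L))}
\leq C_T\!\left(\nrm{U^{0,\delta}}{\bW^{1,1}(0,L)} + \nrm{u_b^\delta}{W^{1,1}(0,T)}\right),
$$
where $C_T$ depends only on $T,L,K_1,K_2,\nrm{G'}{L^\infty}$ and is independent of both $\eps$ and $\delta$. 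Next, the regularization procedure of Definition \ref{def.reg.data} is built from the construction in Theorem 5.3.3 of \cite{ziemer}, which produces approximations satisfying $\nrm{\dx f_\delta}{L^1(0,L)}\leq \nrm{f}{BV(0,L)}$. The matching constants $c_1,c_2$ used to reconcile the incoming boundary with the initial data contribute only zero-order cut-off terms whose derivatives have mass bounded by $\|U^0\|_{L^\infty} + \|u_b\|_{L^\infty}$, which in turn are themselves controlled by the $\BV$ norms. Hence there exists a constant $C>0$ independent of $\delta$ such that
$$
\nrm{U^{0,\delta}}{\bW^{1,1}(0,L)} \leq C\,\nrm{U^0}{\bBV(0,L)}, \qquad
\nrm{u_b^\delta}{W^{1,1}(0,T)} \leq C\,\nrm{u_b}{BV(0,T)},
$$
and thus the previous estimate becomes uniform in both $\eps$ and $\delta$.

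Now let $U^\eps$ be the weak solution of \eqref{five} associated with the original $\BV$ data, given by Theorem \ref{th:exist}. The comparison principle \eqref{estim1} yields
$$
\nrm{U^\delta-U^\eps}{L^1((0,T)\times(0,L))^5} \leq \nrm{U^{0,\delta}-U^0}{L^1(0,L)^5} + \alpha\,\nrm{u_b^\delta-u_b}{L^1(0,T)} \xrightarrow[\delta\to 0]{} 0,
$$
by the $L^1$-density of the regularizations. In particular each component converges in $L^1((0,T)\times(0,L))$ as $\delta\to 0$. Applying the lower semi-continuity of the $\BV$ semi-norm with respect to $L^1$-convergence to each component and summing, we conclude
$$
\sum_{i=0}^{2}\nrm{u_i^\eps}{BV((0,T)\times(0,L))} + \sum_{i=1}^{2}\nrm{q_i^\eps}{BV((0,T)\times(0,L))}
\leq C\!\left(\nrm{U^0}{\bBV(0,L)} + \nrm{u_b}{BV(0,T)}\right),
$$
with $C$ independent of $\eps$, as claimed.

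The main obstacle I anticipate is the second step: verifying that the boundary-matching correctors in Definition \ref{def.reg.data} (the cut-offs $\chi_\delta$ and the constants $c_1,c_2$ used to suppress discontinuities at the corners $(t,x)=(0,0)$ and $(0,L)$) do not inflate $\nrm{U^{0,\delta}}{\bW^{1,1}(0,L)}$ beyond a constant multiple of $\nrm{U^0}{\bBV(0,L)}$. This relies on the two-step construction in Theorem 5.3.3 of \cite{ziemer} together with the fact that $\chi_\delta$ is a fixed bounded cut-off whose derivative integrates to a $\delta$-independent constant, and that the $L^\infty$ data provides bounds on the multiplying factors $(u_1^0 - c_i)$ near the corners.
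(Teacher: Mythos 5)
Your proposal is correct and follows essentially the same route as the paper: regularize the data as in Definition \ref{def.reg.data}, combine Corollary \ref{lemmadt} and Lemma \ref{lem:estimdx} for an $\eps$- and $\delta$-uniform $W^{1,1}$ bound, control $\nrm{U^{0,\delta}}{\bW^{1,1}}$ by the $\BV$ norm of the original data, and pass to the limit $\delta\to 0$ via the $L^1$-stability \eqref{estim1} and lower semi-continuity of the $\BV$ norm. The only cosmetic difference is in handling the matching constants: the paper simply fixes $c_i=\nrm{U^0}{\bBV(0,L)}+\nrm{u_b}{\BV(0,T)}$ and uses the $\delta$-uniform bound on $\nrm{\chi_\delta}{BV}$, which accomplishes the same control you obtain through the $L^\infty$ bounds.
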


\begin{proof}
	Setting $U^\delta(t,x) := (u_1^\delta(t,x) ,u_2^\delta(t,x) ,q_1^\delta(t,x) ,q_2^\delta(t,x) ,u_0^\delta(t,x) )$, one has from the previous estimates :
	$$
	\nrm{U^\delta}{\bW_{t,x}^{1,1}((0,T)\times(0,L))} \leq 
	C \left( \nrm{U^{0,\delta}}{\bW_{x}^{1,1}(0,L)}  + \nrm{u_b^\delta}{W^{1,1}(0,T)} \right)
	$$
	Now using Theorem 5.3.3 \cite{ziemer} one estimates the rhs with respect to the $\BV$ norm of the data :
	$$
	\begin{aligned}
	\nrm{U^{0,\delta}}{\bW_{x}^{1,1}(0,L)}  \leq &
	\nrm{(1-\chi_{\delta}-\chiu{\delta}(L-\cdot) )u_1^0 + \chi_{\delta} c_1}{BV(0,L)} +\\
	& +\nrm{(1-\chiu{\delta}(L-\cdot) )u_2^0 + \chi_{\delta}(L-\cdot) c_2}{BV(0,L)} 
	+ \nrm{(q_1^0,q_2^0,u_0^0)}{BV(0,L)^4} \\
	\leq & \nrm{U^0}{\bBV(0,L)} + \nrm{ \chi_{\delta} c_1}{BV(0,L)}+ \nrm{ \chi_{\delta}(L-\cdot) c_2}{BV(0,L)} ,\\
	\nrm{u_b^\delta}{W^{1,1}(0,T)}  \leq&  \nrm{u_b}{\BV(0,T)}	 + c_1 \nrm{\chi_\delta}{\BV(0,T)}
	\end{aligned}
	$$
	A simple computation shows that $
	\nrm{\chi_{\delta}}{BV(0,\max(T,L))} < C$ uniformly with respect to $\delta$. Then choosing $c_i= \nrm{U^0}{\bBV(0,L)} + \nrm{u_b}{\BV(0,T)}$ for $i \in \{1,2\}$ shows that
	$$
	\nrm{U^{0,\delta}}{\bW_{x}^{1,1}(0,L)} \leq C \left(\nrm{U^0}{\bBV(0,L)} + \nrm{u_b}{\BV(0,T)} \right) 
	$$
	We are in the hypotheses of Theorem 5.2.1. p. 222 of \cite{ziemer} : by  $L^1$-continuity, shown in Theorem \ref{th:exist},
	$U^\delta$ tends to $U := (u_1,u_2,q_1,q_2,u_0)$  in $L^1((0,T)\times(0,L))$ strongly, when $\delta$ vanishes. Then for any open
	set $V \subset (0,T)\times(0,L)$ one has 
	$$
	\nrm{U}{\bBV_{t,x}(V)} \leq \liminf_{\delta \to 0} \nrm{U^\delta}{\bBV_{t,x}((0,T)\times(0,L))}
	= \liminf_{\delta \to 0} \nrm{U^\delta}{\bW^{1,1}_{t,x}((0,T)\times(0,L))}
	$$
	and since the $\bBV$ bound of the sequence $(U^\delta)_{\delta}$ is uniformly bounded with respect to $\delta$, 
	the result extends by Remark 5.2.2. p. 223 \cite{ziemer} to the whole set $(0,T)\times(0,L)$.
\end{proof}

\section{Proof of the convergence result (Theorem \ref{th:conv})}\label{sec:convergence}


	It is divided into two steps. 
	\begin{enumerate}
		\item {\bf Convergence :} 
		From Lemma \ref{lem:infty}, Lemma \ref{prop} and Corollary \eqref{lemmadt}, the sequences $(u_1^\eps)_\eps$ and $(u_2^\eps)_\eps$ are uniformly bounded in $L^\infty\cap BV((0,T)\times (0,L))$.
		Thanks to the Helly's theorem (\cite{dafermos, lefloch}), we deduce that, up to extraction of a subsequence, 
		\begin{align*}
		& u_{1}^{\eps} \xrightarrow[\eps \to 0]{} u_1 \text{ strongly in } L^1([0,T]\times[0,L]), \\
		& u_{2}^{\eps} \xrightarrow[\eps \to 0]{} u_2 \text{ strongly in } L^1([0,T]\times[0,L]),
		\end{align*}
		with limit function $u_1,u_2 \in L^\infty\cap BV((0,T)\times(0,L))$.
		
		By equations \eqref{fivea}, one shows by testing with the appropriate $C^1$ compactly supported 
		functions in $(0,T)\times(0,L)$ and using the definition of the $\BV$ norm (cf for instance,\cite{ziemer} p. 220--221):
		$$
		\|q_1^{\eps}- u_1^{\eps}\|_{L^1((0,T)\times(0,L))}\leq C\eps \nrm{u_1^\eps}{BV((0,T)\times(0,L))}
		$$ 
		which tends to zero as $\eps$ goes to $0$ thanks to the bounds in Corollary \ref{lemmadt} and Lemma \ref{lem:estimdx}. 
		Therefore, $q_{1}^{\eps} \xrightarrow[\eps \to 0]{} u_1$ strongly in $L^{1}((0,T)\times(0,L))$. 
		By the same argument, we show that $q_{2}^{\eps} \xrightarrow[\eps \to 0]{} u_2$ strongly in $L^{1}((0,T)\times(0,L))$.
		Moreover, since $G$ is Lipschitz-continuous, we have, when $\eps$ goes to zero,
		$$
		\|G(q_2^{\eps})-G(u_2)\|_{L^{1}((0,T)\times(0,L))} \longrightarrow 0.
		$$ 
		
		For the convergence of $u_0^{\eps}$, let us first denote $u_0$ a solution to the equation
		$$
		\p_{t}u_0=K_1(u_1-u_0) + K_2(u_2-u_0) + G(u_2).
		$$
		Then, taking the last equation of system \eqref{five},
		subtracting by this latter equation and multiplying by $\sgn(u_0^{\eps}-u_0)$, we get
		in a weak sense that 
		\begin{align*}
		\p_{t}|u_0^{\eps}-u_0|
		& \leq K_1|q_1^{\eps}-u_1| + K_2|q_2^{\eps}-u_2| +(K_1+K_2)|u_0-u_0^{\eps}|
		+ |G(q_2^{\eps})-G(u_2)|  \\
		& \leq K_1|q_1^{\eps}-u_1| + K_2|q_2^{\eps}-u_2| +(K_1+K_2)|u_0-u_0^{\eps}|
		+ \|G'\|_\infty |q_2^{\eps}- u_2|.
		\end{align*}
		Using a Gr\"onwall Lemma, we get, after an integration on $[0,L]$,
		\begin{align*}
		\int_{0}^{L}|u_0^{\eps}-u_0|(t,x)\,dx \leq
		&\ \int_{0}^{L} e^{(K_1+K_2)t}|u_0-u_0^{\eps}|(0,x)\,dx \\
		& + K_{1}\int_{0}^{L}\!\!\int_{0}^{T}e^{(K_1+K_2)(t-s)}|q_1^{\eps}-u_1|(s,x)\,dsdx  \\
		& + (\|G'\|_\infty+ K_2)\int_{0}^{L}\!\!\int_{0}^{T}e^{(K_1+K_2)(t-s)}|q_2^{\eps}-u_2|(s,x)\,dsdx.
		\end{align*}
		Thus, one shall conclude that
		$$
		u_{0}^{\eps} \xrightarrow[\eps \to 0]{} u_0 \text{ strongly  in } L^1((0,T)\times(0,L)).
		$$
		
		\item{\bf The limit system :} \\
		We pass to the limit in \eqref{eq.weak.form.eps},  the weak formulation of system \eqref{five}.
		Suppose that $\bphi \in \cS_5$. 
		Taking $\phi_1=\phi_3$ and $\phi_{2} = \phi_{4}$ in \eqref{eq.weak.form.eps} 
		we may pass to the limit $\eps\to 0$ and we obtain 
		$$
		\begin{aligned}
		&\int_{0}^{T} \int_{0}^{L} u_1(2 \p_t \phi_1+\alpha \p_x \phi_1) dxdt +\int_{0}^{T} u_b(t)\phi_1(t,0)\ dt
		+ \int_{0}^{L} (u_1^0(x)+q_1^0(x))\phi_1(0,x)\ dx \\
		&+  \int_{0}^{T} \int_{0}^{L} u_2( 2 \p_t \phi_2-\alpha \p_x \phi_2) 
		+ \int_{0}^{L} (u_2^0(x)+ q_2^0(x))\phi_2(0,x)\ dx \\
		& + \int_{0}^{T} \int_{0}^{L} u_0 \p_t \phi_3  
		+K_1(u_1-u_0)  (\phi_3 -\phi_1 )
		+K_2 (u_2-u_0)  (\phi_3 -\phi_2 ) 
		+ G(u_2) (\phi_3- \phi_2)\ dxdt \\
		&  +\int_{0}^{L} u_0(0,x) \phi_3(0,x)\ dx=0. \\ 
		\end{aligned}
		$$
		which is exactly \eqref{eq.weak.form.zero} with initial data coming from system \eqref{five}.
		Finally, since the solution of the limit system is unique, we deduce that the whole sequence converges.
		This concludes the proof of Theorem \ref{th:conv}.
	\end{enumerate}

\section{Conclusion} \label{sec.cl}

In this study we presented a model describing the transport of ionic concentrations, in particular sodium, for a simplified version of the loop of Henle in a kidney nephron. After introducing the system, we dealt with the rigorous passage to the limit in semi-linear hyperbolic
$5\times 5$ system, accounting for the presence of epithelium layers, towards a $3\times3$ system \eqref{systlim1}-\eqref{systlim3}. 

Physically, studying the asymptotic with respect to parameter $\eps$ (accounting for permeability) means to consider  very large permeabilities. Roughly speaking, taking into account the limit when $\eps$ goes to 0, means 'removing' the epithelial layers and assuming that the tubule is  directly in contact with the surrounding interstitium.
This work ensures consistency between the reduced model and the 'epithelial' model and also rigorously explains and makes explicit the link between two possible descriptions of the same physical phenomenon, but with two different levels of complexity.

The reduced system has already given a proper representation of the counter-current mechanism, but it is not sufficient to give other suggestions about the description of the entire phenomenon and, for example, about sodium fluxes in clinical cases. 
As already discussed in \cite{MAMV}, despite the addition of the epithelial layer, the model remains far from how the nephron and kidneys actually work.

In order to look after a more appropriate analysis regarding physiological conditions, the first step would be to take into account
water flow and the fluid reabsorption in the descending tubule. Then, the second one would be to consider the electrical forces that apply to ions such as sodium and potassium, and that modulate the flows which depend not only on concentration gradients but also on electrical potential, \cite{laytonedwards, magali}. \\
The system would give a contribution in the
field of physiological renal transport model and it could be a good starting point to elucidate and to better understand some mechanisms underlying concentrating mechanism and the transport of ions in the kidney.

\bibliographystyle{abbrv}
\bibliography{biblioMMV}

\begin{thebibliography}{10}

\bibitem{dafermos}
C.~M. {Dafermos}.
\newblock {\em {Hyperbolic conservation laws in continuum physics. 4th
  edition.}}, volume 325.
\newblock Berlin: Springer, 4th edition edition, 2016.

\bibitem{GioYaYo}
V.~{Giovangigli}, Z.-B. {Yang}, and W.-A. {Yong}.
\newblock {Relaxation limit and initial-layers for a class of
  hyperbolic-parabolic systems.}
\newblock {\em {SIAM J. Math. Anal.}}, 50(4):4655--4697, 2018.

\bibitem{giusti}
E.~{Giusti}.
\newblock {\em {Minimal surfaces and functions of bounded variation.}},
  volume~80.
\newblock Birkh\"auser/Springer, Basel, 1984.

\bibitem{HeiPaRe}
M.~{Heida}, R.~I.~A. {Patterson}, and D.~R.~M. {Renger}.
\newblock {Topologies and measures on the space of functions of bounded
  variation taking values in a Banach or metric space.}
\newblock {\em {J. Evol. Equ.}}, 19(1):111--152, 2019.

\bibitem{James}
F.~{James}.
\newblock {Convergence results for some conservation laws with a reflux
  boundary condition and a relaxation term arising in chemical engineering.}
\newblock {\em {SIAM J. Math. Anal.}}, 29(5):1200--1223, 1998.

\bibitem{JinXin}
S.~{Jin} and Z.~{Xin}.
\newblock {The relaxation schemes for systems of conservation laws in arbitrary
  space dimensions.}
\newblock {\em {Commun. Pure Appl. Math.}}, 48(3):235--276, 1995.

\bibitem{anitalayton}
A.~T. Layton.
\newblock {{M}athematical modeling of kidney transport}.
\newblock {\em Wiley Interdiscip Rev Syst Biol Med}, 5(5):557--573, 2013.

\bibitem{laytonedwards}
A.~T. Layton and A.~Edwards.
\newblock {\em Mathematical Modeling in Renal Physiology}.
\newblock Springer, 2014.

\bibitem{lefloch}
P.~G. {LeFloch}.
\newblock {\em {Hyperbolic systems of conservation laws. The theory of
  classical and nonclassical shock waves.}}
\newblock Basel: Birkh\"auser, 2002.

\bibitem{MAMV}
M.~Marulli, A.~Edwards, V.~Milišić, and N.~Vauchelet.
\newblock On the role of the epithelium in a model of sodium exchange in renal
  tubules.
\newblock {\em Mathematical Biosciences}, 321:108308, 2020.

\bibitem{MiOel.1}
V.~{Mili\v{s}i\'c} and D.~{Oelz}.
\newblock {On the asymptotic regime of a model for friction mediated by
  transient elastic linkages.}
\newblock {\em {J. Math. Pures Appl. (9)}}, 96(5):484--501, 2011.

\bibitem{nataliniterracina}
R.~{Natalini} and A.~{Terracina}.
\newblock {Convergence of a relaxation approximation to a boundary value
  problem for conservation laws.}
\newblock {\em {Commun. Partial Differ. Equations}}, 26(7-8):1235--1252, 2001.

\bibitem{perth}
B.~{Perthame}.
\newblock {\em {Transport equations in biology.}}
\newblock Basel: Birkh\"auser, 2007.

\bibitem{PST}
B.~{Perthame}, N.~{Seguin}, and M.~{Tournus}.
\newblock {A simple derivation of BV bounds for inhomogeneous relaxation
  systems.}
\newblock {\em {Commun. Math. Sci.}}, 13(2):577--586, 2015.

\bibitem{magali}
M.~Tournus.
\newblock {\em Mod{\`e}les d'{\'e}changes ioniques dans le rein: th{\'e}orie,
  analyse asymptotique et applications num{\'e}riques}.
\newblock PhD thesis, Université Pierre et Marie Curie, France, 2013.

\bibitem{tesp}
M.~{Tournus}, A.~{Edwards}, N.~{Seguin}, and B.~{Perthame}.
\newblock {Analysis of a simplified model of the urine concentration
  mechanism.}
\newblock {\em {Netw. Heterog. Media}}, 7(4):989--1018, 2012.

\bibitem{magali2}
M.~Tournus, N.~Seguin, B.~Perthame, S.~R. Thomas, and A.~Edwards.
\newblock {{A} model of calcium transport along the rat nephron}.
\newblock {\em Am. J. Physiol. Renal Physiol.}, 305(7):F979--994, Oct 2013.

\bibitem{ziemer}
W.~P. {Ziemer}.
\newblock {\em {Weakly differentiable functions. Sobolev spaces and functions
  of bounded variation.}}, volume 120.
\newblock Berlin etc.: Springer-Verlag, 1989.

\end{thebibliography}

\end{document}